\newtheorem{thm}{Theorem}[section]
\newtheorem{prop}[thm]{Proposition}
\newtheorem{lem}[thm]{Lemma}
\newtheorem{cor}[thm]{Corollary}
\newtheorem{conj}[thm]{Conjecture}
\theoremstyle{definition}
\newtheorem{exm}[thm]{Example}
\newtheorem{defn}[thm]{Definition}
\theoremstyle{remark}
\newtheorem{remk}[thm]{Remark}
\newtheorem{remks}[thm]{Remarks}
\newtheorem{exms}[thm]{Examples}
\newtheorem{notat}[thm]{Notation}
\numberwithin{equation}{section}
\newcommand{\sC}{{\mathcal C}}
\newcommand{\sE}{{\mathcal E}}
\newcommand{\sF}{{\mathcal F}}
\newcommand{\sH}{{\mathcal H}}
\newcommand{\sI}{{\mathcal I}}
\newcommand{\sK}{{\mathcal K}}
\newcommand{\sN}{{\mathcal N}}
\newcommand{\sO}{{\mathcal O}}
\newcommand{\sP}{{\mathcal P}}
\newcommand{\wt}{\widetilde}
\newcommand{\A}{{\mathbb A}}
\newcommand{\F}{{\mathbb F}}
\renewcommand{\H}{{\mathbb H}}
\newcommand{\Q}{{\mathbb Q}}
\newcommand{\Z}{{\mathbb Z}}
\renewcommand{\1}{{\mathbb{S}}}
\newcommand{\surj}{\twoheadrightarrow}
\newcommand{\ds}{{/\kern-3pt/}}
\begin{document}
\title{On the negative $K$-theory of schemes in finite characteristic}
\author{Amalendu Krishna}

\keywords{K-theory, Singular varieties, Topological cyclic homology}

\baselineskip=10pt 
                                              
\begin{abstract}
We show that if $X$ is a $d$-dimensional scheme of finite type over a 
perfect field $k$ of characteristic $p > 0$, then $K_i(X) = 0$
and $X$ is $K_i$-regular for $i < -d-2$ whenever the resolution of singularities
holds over $k$. This proves the $K$-dimension conjecture of Weibel
\cite[2.9]{Weibel1} (except for $-d-1 \le i \le -d-2$) in all characteristics, 
assuming the resolution of singularities.  
\end{abstract} 

\maketitle

\section{Introduction}
It is by now well known that the negative $K$-theory of singular schemes
is non-zero in general and bears a significant information about the nature of 
the singularity of $X$. Hence it is a very interesting question to know
how much of the negative $K$-theory of a singular variety can survive.
A beautiful answer was given in terms of the following very general 
conjecture of Weibel.
\begin{conj}[Weibel, \cite{Weibel1}]
Let $X$ be a Noetherian scheme of dimension $d$. Then $K_i(X) = 0$ for 
$i < -d$ and $X$ is $K_{-d}$-regular.
\end{conj}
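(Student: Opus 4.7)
The plan is to follow the cdh-descent strategy of Cortiñas--Haesemeyer--Schlichting--Weibel that resolved the conjecture in characteristic zero, adapted to positive characteristic by replacing negative cyclic homology with topological cyclic homology $\operatorname{TC}$. Three ingredients enter: (i) cdh-descent for homotopy $K$-theory $KH$, (ii) the cyclotomic trace $K \to \operatorname{TC}$ together with a Dundas--Goodwillie--McCarthy identification of the fiber of $K \to KH$ in terms of $\operatorname{TC}$-like invariants, and (iii) resolution of singularities to run an induction on $d = \dim X$.

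First I would establish $KH_i(X) = 0$ for $i < -d$. Since $KH$ is $\A^1$-invariant and agrees with $K$ on regular schemes, this vanishing is standard for smooth $X$ by Bass--Quillen. For general $X$, an abstract blow-up square from a resolution $p : X' \to X$ with exceptional divisor $E$ and singular locus $Z$ yields, via cdh-descent, a Mayer--Vietoris sequence relating $KH_*(X)$ to $KH_*(X')$, $KH_*(Z)$ and $KH_*(E)$. Induction on dimension closes this step since $\dim E,\, \dim Z < d$, and the same argument delivers $KH_{-d}$-regularity automatically from the $\A^1$-invariance of $KH$.

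The heart of the argument is then to control the fiber $F(X) = \hofib\bigl(K(X) \to KH(X)\bigr)$. After $p$-completion the cyclotomic trace approximates $F(X)$ by the analogous fiber of $\operatorname{TC}$; after rationalization it is governed by negative cyclic homology. Using resolution of singularities together with a pro-system analysis of $\operatorname{TC}$ of infinitesimal thickenings of $X$ along its singular locus (in the spirit of Geisser--Hesselholt), one aims to show $F_i(X) = 0$ for $i < -d$ and that the map $F_{-d}(X) \to F_{-d}(X \times \A^n)$ is an isomorphism for all $n$. Combined with the preceding step and the long exact sequence $\cdots \to F_i(X) \to K_i(X) \to KH_i(X) \to F_{i-1}(X) \to \cdots$, this yields both $K_i(X) = 0$ for $i < -d$ and the $K_{-d}$-regularity of $X$.

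The central obstacle is the sharp vanishing range for the pro-topological cyclic homology in the last step. Bounding the de Rham--Witt cohomology of the affine pieces produced by the resolution in the required range is delicate; present techniques fall short by two degrees, which is precisely why the paper's theorem reads $i < -d-2$ rather than $i < -d$. Closing this two-degree gap, and upgrading vanishing to the $\A^1$-invariant statement needed for $K_{-d}$-regularity, would require either a finer cohomological-dimension estimate for de Rham--Witt sheaves on singular $\F_p$-schemes or a direct pro-calculation of $\operatorname{TC}$ along a regular sequence of length $d$ governing the thickenings of the singular locus. Neither is presently available, and it is exactly at this point that a proof of the full Weibel conjecture must make its essential new input.
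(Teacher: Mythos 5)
The statement you set out to prove is Weibel's conjecture itself, and the paper does not prove it: its main theorem only establishes $K_i(X)=0$ and $K_i$-regularity for $i < -d-2$, for varieties over a perfect field of characteristic $p>0$ and assuming resolution of singularities. Your proposal correctly reconstructs the paper's overall strategy --- cdh-descent for $KH$ and for rational $K$-theory, the cyclotomic trace to $TC$ with the Geisser--Hesselholt excision theorem and McCarthy's theorem giving cdh-descent for the fiber, and the Suslin--Voevodsky bound on cdh-cohomological dimension --- and you honestly locate the obstruction in the vanishing range for topological cyclic homology. So, by your own admission, this is not a proof of the conjecture; the gap is real and is exactly the gap the paper leaves open.

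Two points of comparison with what the paper actually does. First, the paper works with the fiber $L^n$ of the cyclotomic trace $K/p^n \to TC/p^n(-;p)$ rather than your $F = \hofib\left(K \to KH\right)$; the decisive input is Hesselholt's vanishing $TC_i(A;p)=0$ for $i<-1$ for $A$ essentially of finite type over $k$. Since $TC_{-1}$ is generically non-zero in characteristic $p$ (it is the cokernel of $\id - F$ on Witt vectors), the cdh-sheaves $a_{cdh}\pi_i(L^n)$ vanish only for $i<-2$, and the descent spectral sequence then yields $L^n_i(X)=0$ only for $i<-d-2$: this is precisely the two-degree loss you diagnose, and it is structural, not merely technical. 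Second, your sketch omits the part of the paper's argument that recovers partial information in the critical degrees: the surjectivity of $H^d_{Zar}(X, W\sO_X/p^n) \to H^d_{cdh}(X, W\sO_X/p^n)$, proved by induction on $n$ from the square-zero extension $W\sO/p \surj \sO$ and the characteristic-zero-style comparison of Zariski and cdh cohomology of $\sO_X$; this is what gives the divisibility of $K_{-d-2}$ and hence the vanishing for $i<-d-2$ after combining with the prime-to-$p$ and rational results. Any genuine proof of the full conjecture in the range $-d-2 \le i \le -d$ would have to supply exactly the new input you name --- control of $TC$ (equivalently, of the relevant de Rham--Witt or Witt-vector cohomology) one or two degrees beyond what Hesselholt's bound provides --- and neither your proposal nor the paper does so.
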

This conjecture was proved recently by Cortinas-Haesemeyer-Schlichting-Weibel
\cite[Theorem~6.2]{CHSW} for schemes of finite type over a field of
characteristic zero. If $X$ is a scheme of finite type over a field of
positive characteristic, the above conjecture was proved by Weibel
\cite{Weibel2} provided the dimension of $X$ is at most two. Our aim in this
paper is to prove the conjecture for a $d$-dimensional scheme in the positive
characteristic (except for $i = -d-1$) assuming the resolution of 
singularities. A {\sl variety} $X$ in this paper will mean a scheme of
finite type over an infinite perfect field $k$ of characteristic $p > 0$, 
which will be fixed throughout this paper.

\begin{defn}[Resolution of singularities] We will say that the resolution
of singularities holds over $k$ if given any equidimensional scheme $X$
of finite type over $k$, there exists a sequence of monoidal transformations
\[
X_r \to X_{r-1} \to \cdots \to X_1 \to X_0 = X
\]
such that the following hold: \\
$(i)$ the reduced subscheme $X^{\rm red}_r$ is smooth over $k$, \\
$(ii)$ the center $D_i$ of the monoidal transformation $X_{i+1} \to X_i$
is smooth and connected and nowhere dense in $X_i$.
\end{defn}
The resolution of singularities holds over fields of characteristic zero
by the work of Hironaka \cite[Theorem~1*]{Hironaka1}. For fields of positive
characteristics, this problem is still not known though widely expected to
be true. Recently, Hironaka \cite[]{Hironaka2} has outlined a complete program
to solve this problem and work on this program is in progress.

Recall that a variety $X$ is said to be $K_i$-regular if the natural
map $K_i(X) \to K_i(X[T_1, \cdots , T_r])$ is an isomorphism for all 
$r \ge 1$, where $K_i(X)$ is the $i$th stable homotopy group of the 
non-connective spectrum $K(X)$ of perfect complexes on $X$. 
It is known from a result of Vorst \cite{Vorst} that a scheme which is
$K_i$-regular, is also $K_j$-regular for $j \le i$.
We now state the main result of this paper.
\begin{thm}\label{thm:main}
Assume that the resolution of singularities holds over $k$ and let $X$ be a variety 
of dimension $d$ over $k$. Then \\
$(i) \ \ K_i\left(X, {\Z}/n\right) = 0$ for $i < -d-1$ and for all
$n \ge 1$. \\
$(ii) \ \ K_{-d-2}\left(X[T_1, \cdots , T_j]\right)$ is a divisible
torsion group for all $j \ge 0$. \\ 
$(iii)\ \ K_i(X) = 0$ and $X$ is $K_i$-regular for $i < -d-2$. 
\end{thm}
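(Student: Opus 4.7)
The plan is to proceed by induction on $d = \dim X$, the low-dimensional cases $d \leq 2$ being due to Weibel \cite{Weibel2}. For the inductive step, resolution of singularities produces a smooth $\pi : \wt{X} \to X$; let $Y \subset X$ contain the singular locus (so $\dim Y < d$) and set $E = \pi^{-1}(Y)$. Then $\dim E < d$ since $Y$ is nowhere dense. This yields an abstract blow-up square
\[
\begin{CD}
E @>>> \wt{X} \\
@VVV @VV\pi V \\
Y @>>> X.
\end{CD}
\]

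First, I would establish $KH_i(X) = 0$ for $i < -d$. By Haesemeyer's cdh descent theorem for Weibel's homotopy $K$-theory, the square above yields a Mayer--Vietoris long exact sequence in $KH$; since $\wt{X}$ is smooth, $KH_i(\wt{X}) = K_i(\wt{X}) = 0$ for $i < 0$ by Quillen; and the inductive hypothesis gives $KH_i(Y) = KH_i(E) = 0$ for $i < -(d-1)$. The long exact sequence then forces $KH_i(X) = 0$ for $i < -d$.

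For part (i), write $n = n_p \cdot n_{p'}$. For the prime-to-$p$ part, Weibel's rigidity identifies $K(X, \Z/n_{p'})$ with $KH(X, \Z/n_{p'})$, reducing to the above. For $n_p = p^v$, use the cyclotomic trace $K(X, \Z/p^v) \to TC(X, \Z/p^v)$: Dundas--Goodwillie--McCarthy shows the fiber is invariant under nilpotent thickenings, which reduces control to the smooth case (McCarthy), where Geisser--Hesselholt compute $TC(-, \Z/p^v)$ in terms of logarithmic de Rham--Witt cohomology and bound its vanishing range. Combining with cdh descent for $TC$ modulo $p^v$ through the blow-up square and induction on $Y, E$ gives $K_i(X, \Z/p^v) = 0$ for $i < -d-1$.

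For (ii) and (iii), combine (i) with a rational argument. Goodwillie's theorem together with the characteristic-free cdh descent for negative cyclic homology (the rational part of the CHSW framework) yields $K_i(X) \otimes \Q = 0$ for $i < -d-1$. From (i), for $i \leq -d-2$, $K_i(X)/n = 0$ and the $n$-torsion of $K_{i-1}(X)$ vanishes for all $n$; thus $K_{-d-2}(X)$ is divisible, and being rationally zero it is divisible torsion, giving (ii). For $i < -d-2$, $K_i(X)$ is divisible, torsion-free and rationally zero, hence zero; applying the same analysis to $X[T_1, \dots, T_r]$ and invoking homotopy invariance of $KH$ gives the $K_i$-regularity in (iii). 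The main obstacle is the $TC$-analysis for singular schemes in characteristic $p$: cdh descent for $TC$ modulo $p^v$ and the behaviour of the de Rham--Witt complex under blow-ups is the crucial technical input, and it is what forces the loss of one degree in (i) compared with the $KH$-bound.
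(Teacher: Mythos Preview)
Your overall strategy---handle the prime-to-$p$ part via $KH$, the $p$-primary part via the cyclotomic trace, and the rational part separately, then assemble via the universal coefficient sequence---is exactly what the paper does. The final assembly in your (ii) and (iii) is also correct and matches the paper.

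The genuine gap is in the $p$-primary step. You invoke ``cdh descent for $TC$ modulo $p^v$ through the blow-up square,'' but $TC/{p^v}$ does \emph{not} satisfy $cdh$ descent: an abstract blow-up includes the case $X_{\red}\hookrightarrow X$, and $TC$ is not invariant under infinitesimal thickenings (if it were, McCarthy's theorem would force $K$-theory to be nil-invariant). So neither $K/{p^v}$ nor $TC/{p^v}$ gives you a Mayer--Vietoris sequence for your blow-up square, and the induction on $Y,E$ cannot close. The remark that Dundas--Goodwillie--McCarthy ``reduces control to the smooth case'' is not right either: nil-invariance of the fibre does not by itself pass information from $\wt{X}$ to $X$ without a descent mechanism.

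What the paper does instead is prove $cdh$ descent for the homotopy fibre $L^n$ of the trace map $K/{p^n}\to TC/{p^n}$: $L^n$ \emph{is} nil-invariant (McCarthy), satisfies excision (Geisser--Hesselholt), Nisnevich descent, and Mayer--Vietoris for blow-ups along regular embeddings (Blumberg--Mandell), and these suffice (Theorem~3.1) for $cdh$ descent. Since $TC_i(A;p,\Z/p^n)=0$ for $i<-1$, the $cdh$ spectral sequence for $L^n$ immediately gives $L^n_i(X)=0$ for $i<-d-2$, hence $K_i(X,\Z/p^n)=0$ for $i<-d-2$. Squeezing out the extra degree to reach $i<-d-1$ requires a further comparison: the paper identifies $a_{\sC}\pi_{-1}(TC/{p^n})$ with a quotient of $W\sO/{p^n}$ and shows that $H^d_{\Zar}(X,W\sO_X/{p^n})\twoheadrightarrow H^d_{cdh}(X,W\sO_X/{p^n})$, using only Zariski descent for $TC$. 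This Witt-vector comparison, not any descent for $TC$, is the technical input that costs one degree; your sketch does not supply a substitute for it.

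A minor point: for the rational part you quote cdh descent for $\widetilde{HN}_\Q$, but in characteristic $p$ this presheaf is contractible, so the paper simply observes $K_\Q\simeq K^{\inf}_\Q$, proves $cdh$ descent for the latter via Theorem~3.1, and gets $K_i(X)\otimes\Q=0$ already for $i<-d$.
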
    
\section{$K$-theory and topological cyclic homology}
In this section, we briefly recall the topological cyclic homology
of rings and schemes and the cyclotomic trace map from the $K$-theory 
to the topological cyclic homology. We then show that the homotopy
fiber of this trace map satisfies the descent for the $cdh$-topology.

Let $p$ be a fixed prime number. Let $A$ be a commutative ring which is 
essentially of finite type over a field $k$ of characteristic $p$.
Recall from \cite{GH1} that the topological Hochschild spectrum $T(A)$
is a symmetric ${\1}$-spectrum, where $\1$ is the circle group. Let
$C_{r} \subset {\1}$ be the cyclic subgroup of order $r$. Then one
defines 
\[
TR^{n}\left(A;p\right) = {F\left({\1}/{C^{p^{n-1}}}, T(A)\right)}^{\1}
\]
to be the fixed point spectrum of the function spectrum 
$F\left({\1}/{C^{p^{n-1}}}, T(A)\right)$. There are the frobenius and the
restriction maps of spectra 
\[
F, R : TR^{n}\left(A;p\right) \to TR^{n-1}\left(A;p\right).
\]
The spectrum $TC^{n}$ is defined as the
homotopy equalizer of the maps $F$ and $R$, i. e.,
\[
TC^{n}\left(A;p\right) = eq\left(TR^{n}\left(A;p\right)
\stackrel{F} {\underset {R} \rightarrow} TR^{n}\left(A;p\right)\right),
\]
and the topological cyclic homology spectrum $TC\left(A;p\right)$ is 
defined as the homotopy limit
\[
TC\left(A;p\right) = {\rm holim} {\rm TC}^{n}\left(A;p\right).
\] 
One similarly defines 
\[
TR\left(A;p\right) = {\underset {R} {\rm holim}} {\rm TR}^{n}\left(A;p\right)
\]
\[
TF\left(A;p\right) = {\underset {F} {\rm holim}} {\rm TR}^{n}\left(A;p\right).
\] 
It was shown by Geisser and Hesselholt in {\sl loc. cit.} that the
topological Hochschild and cyclic homology satisfy descent for
a Cartesian diagram of rings and they were able to define these
homology spectra for a Noetherian scheme $X$ using the Thomason's 
construction of the hypercohomology spectrum \cite[1.33]{Thomason1}.
Recently, Blumberg and Mandell \cite{BM} have made a significant progress in 
the study of the topological Hochschild and cyclic homology of schemes.
They globally define the spectra $T(X)$ and $TC(X)$ for a Noetherian scheme 
$X$ as the topological Hochschild and cyclic homology spectra of the
spectral category $D\left(Perf/X\right)$ which is the Thomason's
derived category of perfect complexes on $X$ \cite{Thomason2}.
They show that their definition of these spectra coincides with the
above definition for affine schemes. They also prove the localization
and the Zariski descent properties of the topological Hochschild and cyclic
homology of schemes. We refer to \cite{BM} for more details. In this paper,
the topological Hoschschild and cyclic homology of schemes will be 
considered in the sense of \cite{BM}. For any symmetric spectrum $E$ and
for $n \ge 1$, let $E/{p^n}$ denote the smash product of $E$ with a
mod $p^n$ Moore spectrum ${\Sigma}^{\infty}/{p^n}$.

Let $K(X)$ denote the Thomason's non-connective spectrum of the 
perfect complexes on $X$. For a ring $A$, there is a cyclotomic trace map
\cite{Boks} of non-connective spectra
\[
K/{p^n} (A) \xrightarrow{tr} TC/{p^n}\left(A;p\right).
\]
Since $K$-theory satisfies Zariski descent by \cite{Thomason2} and so
does the topological cyclic homology by \cite{BM}, taking the induced
map on the Zariski hypercohomology spectra gives for any Noetherian scheme
$X$, the cyclotomic trace map of spectra
\begin{equation}\label{eqn:trace}
K/{p^n} (X) \xrightarrow{tr} TC/{p^n}\left(X;p\right).
\end{equation}
Let $L^n(X)$ denote the homotopy fiber of the trace map in ~\ref{eqn:trace}.
If ${Sch}/{k}$ denotes the category of varieties over $k$, then one gets
a presheaf of homotopy fibrations of spectra on ${Sch}/{k}$
\begin{equation}\label{eqn:trace1}
L^n \to K/{p^n} \xrightarrow{tr} TC/{p^n}\left(-;p\right).
\end{equation}

\section{cdh-Descent for $L^n$}
We recall from \cite{CHSW} that a presheaf of spectra $\sE$ on the 
category $Sch/k$ satisfies the {\sl Mayer-Vietoris property} for a
Cartesian square of schemes
\begin{equation}\label{eqn:Car}
\xymatrix{
Y' \ar[r] \ar[d] & X' \ar[d] \\
Y \ar[r] & X}
\end{equation}
if applying $\sE$ to this square results in a homotopy Cartesian
square of spectra. We say that $\sE$ satisfies the Mayer-Vietoris
property for a class of squares provided it satisfies this property
for each square in that class. One says that the presheaf of spectra
$\sE$ is {\sl invariant under infinitesimal extension} if for any affine
scheme $X$ and a closed subscheme $Y$ of $X$ defined by a sheaf of nilpotent ideals
$\sI$, the spectrum ${\sE}\left(X, Y\right)$ is contractible, where
the latter is the homotopy fiber of the map ${\sE}(X) \to {\sE}(Y)$.
One says that $\sE$ satisfies the
{\sl excision property} if for any morphism of affine schemes 
$f : X \to Y$ and a sheaf of ideals $\sI$ on $X$ such that 
$I \cong f_* f^*(\sI)$, the spectrum $\sE\left(Y, X, \sI\right)$ is
contractible, where $\sE\left(X, Y, \sI\right)$ is defined as the
homotopy fiber of the map ${\sE}\left(X, \sI\right)
\to {\sE}\left(Y, \sI\right)$. 
An elementary Nisnevich square is a 
Cartesian square of schemes as above such that $Y \to X$ is an open
embedding, $X' \to X$ is {\'e}tale and $(X' - Y') \to (X - Y)$ is an
isomorphism. Then one says that $\sE$ satisfies {\sl Nisnevich descent}
if it satisfies the Mayer-Vietoris property for all elementary 
Nisnevich squares.   

We next recall from \cite{Voev} (see also {\sl loc. cit.}) that a 
$cd$-structure on a small
category $\sC$ is a class $\sP$ of commutative squares in $\sC$ that is
closed under isomorphisms. Any such $cd$-structure defines a topology
on $\sC$. We assume in the rest of the paper that our ground field $k$
admits the resolution of singularities. In this case, the 
{\sl combined cd-structure} on the category $Sch/k$ consists of all
elementary Nisnevich squares and all abstract blow ups, where an
abstract blow-up is a Cartesian square as in ~\ref{eqn:trace1} where
$Y \to X$ is a closed embedding, $X' \to X$ is proper and the induced
map $(X'- Y')_{red} \to (X - Y)_{red}$ is an isomorphism.
The topology generated by the combined $cd$-structure is called the
$cdh$-topology. Let $Sm/k$ denote the category of smooth varieties over 
$k$. 
Since the resolution of singularities holds over $k$, the restriction of   
the $cd$-structure to the category $Sm/k$ where abstract blow-ups are
replaced by the smooth blow-ups, is complete, bounded and regular
({\sl cf.} \cite[Section~4]{Voev}). The topology generated by this
$cd$-structure on $Sm/k$ is called the $scdh$-topology. This is just the
restriction of the $cdh$-topology on the subcategory $Sm/k$.
In this paper, we shall consider the local injective model structure
on the category of presheaves of spectra on $Sch/k$ as described in 
\cite{CHSW}. 

For the local injective model structure the category of presheaves of
spectra on a given topology $\sC$ on $Sch/k$, a fibrant replacement
of a presheaf of spectra $\sE$ is a trivial cofibration $\sE \to {\sE'}$
where ${\sE'}$ is fibrant. We shall write such a fibrant replacement
as $\H_{\sC}(-, \sE)$. We shall say that $\sE$ satisfies the $cdh$-descent
if it satisfies the Mayer-Vietoris property for all elementary
Nisnevich squares and all abstract blow-ups. By 
\cite[Theorem~3.4]{CHSW}, this is equivalent to the assertion that the
map $\sE \to \H_{cdh}(-, \sE)$ is a global weak equivalence in the sense
that $\sE(X) \to \H_{cdh}(X, \sE)$ is a weak equivalence for all $X \in Sch/k$.   
\begin{thm}\label{thm:descent}
Let $\sE$ be a presheaf of spectra on $Sch/k$ such that $\sE$ satisfies 
excision, is invariant under infinitesimal extension, satisfies Nisnevich
descent and satisfies the Mayer-Vietoris property for every blow-up along
a regular closed embedding. Then $\sE$ satisfies the $cdh$-descent.
\end{thm}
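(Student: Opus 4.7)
The plan is to reduce $cdh$-descent for $\sE$ to the Mayer-Vietoris property for abstract blow-ups via \cite[Theorem~3.4]{CHSW}, which asserts that for the combined $cd$-structure on $Sch/k$, $cdh$-descent is equivalent to Mayer-Vietoris for its generating squares, namely elementary Nisnevich squares and abstract blow-ups. Nisnevich descent is already in the hypothesis, so the task is to promote the given Mayer-Vietoris property for blow-ups along regular closed embeddings to Mayer-Vietoris for an arbitrary abstract blow-up square as in ~\ref{eqn:Car}. I would argue by Noetherian induction on the pair (dimension of $X$, dimension of $Y$), ordered lexicographically, with the base case trivial.

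First I would reduce to the situation where $X$, $Y$, $X'$, and $Y'$ are all reduced. The canonical closed immersions $Z^{\rm red}\hookrightarrow Z$ (for $Z$ any of these four schemes) are nilpotent thickenings, and invariance of $\sE$ under infinitesimal extension makes the comparison cube between the original square and its reduction levelwise homotopy Cartesian. Hence Mayer-Vietoris for the original square is equivalent to Mayer-Vietoris for the reduced one.

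Next, assuming all data reduced, I would use resolution of singularities together with Chow's lemma to refine $\pi: X' \to X$ by a sequence of monoidal transformations $\widetilde{X} = X_r \to X_{r-1} \to \cdots \to X_0 = X$ with smooth, nowhere dense centers $D_i \subset X_i$, arranged so that $\widetilde{X}$ is smooth, factors through $X'$ over $X$, and so that the composite is an isomorphism over $X \setminus Y$. Each step $X_{i+1} \to X_i$ is a blow-up along a regular closed embedding and satisfies Mayer-Vietoris by hypothesis. Because each $D_i$ is nowhere dense in $X_i$ (hence has dimension strictly less than that of $X$), any auxiliary abstract blow-up with closed base contained in some $D_i$ is controlled by the inductive hypothesis. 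Splicing the resulting homotopy Cartesian squares along the tower, and using Nisnevich descent to localize where necessary, I would deduce Mayer-Vietoris for the composite $\widetilde{X} \to X$.

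Finally, to transfer Mayer-Vietoris from $\widetilde{X} \to X$ back to the given $X' \to X$, I would pull back $\widetilde{X} \to X$ along $\pi$ to obtain an auxiliary abstract blow-up square over $X'$ whose closed base lies in $Y'$, and to which the inductive hypothesis applies. A diagram chase comparing the two resulting homotopy Cartesian squares then yields Mayer-Vietoris for the original square, closing the induction. The principal obstacle I anticipate lies in this last step: one must engineer the smooth-center refinement and its pullback so that every auxiliary square that appears is strictly simpler, in the sense of the chosen induction, than the square one started with, while preserving enough information to perform the comparison. This is the technical core of Haesemeyer's argument in the characteristic zero setting of \cite[Section~3]{CHSW}, and it becomes available here once the resolution of singularities hypothesis is in force.
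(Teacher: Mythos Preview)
Your proposal has a genuine gap at the step where you invoke the hypothesis on each monoidal transformation $X_{i+1}\to X_i$. You assert that because the center $D_i$ is smooth, the closed immersion $D_i\hookrightarrow X_i$ is a regular embedding, so the Mayer--Vietoris hypothesis applies. But smoothness of $D_i$ does not force regularity of the embedding when the ambient $X_i$ is singular, and in a resolution tower starting from a singular $X_0=X$ the intermediate $X_i$ are typically singular until the very top. (Concretely: the origin is a smooth point of the node $\Spec\, k[x,y]/(xy)$, yet its ideal $(x,y)$ is not generated by a regular sequence there.) Thus the hypothesis on $\sE$ does not cover these blow-ups, and your splicing argument never gets started. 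The transfer step you flag as the principal obstacle is secondary; the real difficulty is earlier.

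The paper takes a different route, that of \cite[Theorem~3.12]{CHSW} and \cite[Theorem~6.4]{Haes}. One first observes that in $Sm/k$ every closed immersion of smooth schemes is automatically regular, so the hypotheses already give $scdh$-descent on smooth varieties. For singular $X$ one does not attempt to manufacture a tower of regular-center blow-ups over $X$ at all. Instead, excision, infinitesimal invariance, and Nisnevich descent together yield Mayer--Vietoris for closed covers and for \emph{finite} abstract blow-ups; with these in hand one runs an induction on embedding codimension, treating first hypersurfaces inside smooth schemes (where resolution does produce towers of blow-ups in smooth ambient varieties, so the regular-embedding hypothesis is legitimately available; see \cite[Theorem~6.1]{Haes}), then complete intersections via closed-cover Mayer--Vietoris, and finally arbitrary $X$ by locally embedding in affine space. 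The regular-embedding hypothesis is thus invoked only where the ambient scheme is smooth and it holds automatically.
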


\begin{proof} The proof of this theorem is very similar to the proof
of the analogous theorem in {\sl loc. cit.} (Theorem~3.12) when $k$ has
characteristic zero. We only give the brief sketch. As shown above, it
suffices to show that the map 
\begin{equation}\label{eqn:descent1}
\sE(X) \to \H_{cdh}(X, \sE)
\end{equation}
is a weak equivalence for all varieties $X$ over $k$. Since the 
$scdh$-topology on $Sm/k$ is generated by elementary Nisnevich squares and 
smooth blow-ups and since the closed embeddings of smooth varieties are
regular embeddings, we see that $\sE$ satisfies the $scdh$-descent
in $Sm/k$.

Now assume $X$ is singular. As explained in {\sl loc. cit.}, the 
argument goes as in the proof of Theorem~6.4 in \cite{Haes}.
The excision, invariance under infinitesimal extension and Nisnevich
descent together imply that $\sE$ satisfies the Mayer-Vietoris property
for closed covers and for finite abstract blow-ups. Now if $X$ is a 
hypersurface in a smooth scheme, we can follow the proof of
Theorem~6.1 in \cite{Haes} to conclude that ~\ref{eqn:descent1} holds for
$X$ since the resolution of singularities holds over $k$, which is
also infinite. If $X$ is a complete intersection inside a smooth $k$-scheme,
then we can use the hypersurface case, the Mayer-Vietoris for the closed
covers and an induction on the embedding dimension of $X$ to conclude
~\ref{eqn:descent1} for $X$. The general case follows from this as
shown in \cite[Theorem~6.4]{Haes}.
\end{proof} 
\begin{cor}\label{cor:descentL}
The presheaf of spectra $L^n$ ({\sl cf.} ~\ref{eqn:trace1}) satisfies
the $cdh$-descent.
\end{cor}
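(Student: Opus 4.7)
The plan is to apply \thmref{thm:descent} to the presheaf $L^n$ by verifying its four hypotheses. Since $L^n$ is defined pointwise as the homotopy fiber of the trace map $K/p^n \to TC/p^n(-;p)$, and the properties listed in \thmref{thm:descent} (excision, invariance under infinitesimal extension, Nisnevich descent, and the Mayer-Vietoris property for regular blow-ups) are each closed under taking homotopy fibers of morphisms of presheaves of spectra, it suffices to verify all four properties for $K/p^n$ and $TC/p^n(-;p)$ separately, or to verify them for the trace map directly.

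First, I would handle Nisnevich descent: this holds for $K/p^n$ by Thomason--Trobaugh \cite{Thomason2} and for $TC/p^n(-;p)$ by Blumberg--Mandell \cite{BM}, so the fiber $L^n$ inherits Nisnevich descent. Next, invariance under infinitesimal extension is the content of the McCarthy--Dundas--Goodwillie theorem: the cyclotomic trace map induces an equivalence on the relative theories $K/p^n(X,Y) \to TC/p^n(X,Y;p)$ whenever $Y \hookrightarrow X$ is defined by a nilpotent ideal sheaf on an affine scheme, so the relative fiber $L^n(X,Y)$ is contractible. For excision, the analogous statement that the trace map sends the excision square of an ideal with $I \cong f_*f^*\sI$ to a homotopy Cartesian square follows from Geisser--Hesselholt's excision theorem for topological cyclic homology with finite coefficients, combined with the fact that $K/p^n$ satisfies excision on ideals after $p$-completion (via Suslin's rigidity / Geisser--Hesselholt arguments); consequently $L^n$ satisfies excision.

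The final and technically most delicate property is Mayer--Vietoris for blow-ups along a regular closed embedding $Y \hookrightarrow X$ with blow-up square $(Y', X', Y, X)$. For $K$-theory, this is Thomason's projective bundle / blow-up formula applied to regularly embedded centers, giving a homotopy Cartesian square after smashing with the mod $p^n$ Moore spectrum. For $TC/p^n(-;p)$, the analogous formula for blow-ups along regular embeddings is available from the projective bundle formula for topological cyclic homology together with the structure of blow-ups as iterated projective bundle / closed complement decompositions; this follows from the work of Blumberg--Mandell on localization and projective bundle formulas in $TC$. Taking fibers, $L^n$ inherits the Mayer--Vietoris property for regular blow-ups.

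Having established all four hypotheses, \thmref{thm:descent} immediately yields that $L^n$ satisfies $cdh$-descent, proving the corollary. The main obstacle in carrying out this plan rigorously is the verification of excision and regular blow-up Mayer--Vietoris for $TC/p^n(-;p)$, since these require the full strength of the Blumberg--Mandell spectral category framework \cite{BM} to globalize the affine statements of Geisser--Hesselholt to schemes; once this input is granted, the rest of the verification is a formal consequence of the fact that homotopy fibers commute with all the relevant homotopy Cartesian properties.
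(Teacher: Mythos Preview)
Your overall plan---verify the four hypotheses of \thmref{thm:descent} and conclude---is exactly the paper's strategy, and your treatment of Nisnevich descent, nil-invariance (via McCarthy), and Mayer--Vietoris for regular blow-ups is essentially the same as the paper's (the paper cites \cite[Theorem~1.4]{BM} for the blow-up property, which amounts to the $TC$ side; combined with Thomason for $K$ this gives the result for the fiber).

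There is, however, a genuine gap in your handling of excision. You write that $K/p^n$ ``satisfies excision on ideals after $p$-completion (via Suslin's rigidity / Geisser--Hesselholt arguments)''. This is not correct: algebraic $K$-theory with mod $p^n$ coefficients does \emph{not} satisfy excision for general Milnor squares. Suslin's excision theorem is a rational statement, and no integral or mod-$p$ analogue for $K$-theory alone is available. What Geisser--Hesselholt prove in \cite{GH2} is a statement about the \emph{trace map}: the cyclotomic trace induces an equivalence on the bi-relative terms with finite coefficients, i.e.\ the failure of excision for $K/p^n$ matches exactly the failure for $TC/p^n(-;p)$. This says precisely that the fiber $L^n$ satisfies excision, and it is a result about $L^n$ directly---it cannot be recovered by checking excision for $K/p^n$ and $TC/p^n$ separately, since the former simply fails. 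The paper accordingly invokes \cite[Theorem~1]{GH2} for $L^n$ itself at this step. Once you replace your argument for excision by this citation, the rest of your proof goes through and coincides with the paper's.
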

\begin{proof} We need to show that $L^n$ satisfies all the conditions
of Theorem~\ref{thm:descent}. We have the homotopy fibration of
presheaves of spectra
\[
L^n \to K/{p^n} \to TC/{p^n}\left(-;p\right).
\]
The fact that $L^n$ satisfies excision was proved by Geisser-Hesselholt
\cite[Theorem~1]{GH2}. The invariance of $L^n$ under infinitesimal
extension was proved by McCarthy \cite[Main Theorem]{Mac}.
Next we show that $L^n$ satisfies Nisnevich descent.
$K/{p^n}$ satisfies Nisnevich descent by \cite[Theorem~10.8]{Thomason2}.
$TC/{p^n}\left(-;p\right)$ satisfies Nisnevich descent by
\cite[Corollary~3.3.4]{GH1} and by the agreement of the definition
of the topological cyclic homology as given in \cite{GH1} with that
of \cite{BM} since the topological cyclic homology of \cite{BM}
satisfies the Zariski descent (see the discussion in Remark~3.3.5
of \cite{GH1}). We now consider the following commutative diagram
of spectra for a given variety $X$.
\begin{equation}\label{eqn:spectra*}
\xymatrix{
L^n(X) \ar[r] \ar[d] & K/{p^n}(X) \ar[r] \ar[d] & TC/{p^n}\left(X;p\right) 
\ar[d] \\
\H_{Nis}\left(X, L^n\right) \ar[r] & \H_{Nis}\left(X, K/{p^n}\right) \ar[r]
& \H_{Nis}\left(X, TC/{p^n}\left(-;p\right)\right)}
\end{equation}
Since the top row in the above diagram is a homotopy fibration and 
the bottom row is a fibrant replacement of the top row, the bottom row
is also a homotopy fibration ({\sl cf.} \cite[1.35]{Thomason1},
see also \cite[Section~5]{CHSW}). Now, since the middle and the right
vertical maps are weak equivalences, we see that that the left vertical
map is also a weak equivalence. This verifies the Nisnevich descent for
$L^n$. Finally, $L^n$ satisfies the Mayer-Vietoris property for the
blow-up along regular closed embeddings by \cite[Theorem~1.4]{BM}. 
We conclude from Theorem~\ref{thm:descent} that $L^n$ satisfies 
$cdh$-descent.
\end{proof}
Let $a$ denote the natural morphism from the $cdh$-site to the Zariski
site on the category $Sch/k$. For any Zariski sheaf $\sF$, let
$a_{cdh} {\sF}$ denote the $cdh$-sheafification  of $\sF$. 
\begin{cor}\label{cor:descentL*}
For any $k$-variety $X$, there is a strongly convergent spectral
sequence 
\[
E^{p,q}_2 = H^p_{cdh}\left(X, a_{cdh}{\pi}_q(L^n)\right) 
\Rightarrow L^n_{q-p}(X),
\]
where the differentials of the spectral sequence are $d_r :
E^{p,q}_r \to E^{p+r, q+r-1}_r$.
\end{cor}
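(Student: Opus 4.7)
The plan is to apply the standard descent spectral sequence associated to a fibrant replacement in the local injective model structure, and then invoke \corref{cor:descentL} to identify the abutment with $\pi_{q-p}L^n(X)$.

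First, for any presheaf of spectra $\sE$ on $Sch/k$ equipped with the $cdh$-topology, the Postnikov tower of a local injective fibrant replacement $\H_{cdh}(-, \sE)$ produces a natural spectral sequence
\[
E_2^{p,q} = H^p_{cdh}(X, a_{cdh}\pi_q(\sE)) \Rightarrow \pi_{q-p}\H_{cdh}(X, \sE),
\]
with differentials $d_r \colon E_r^{p,q} \to E_r^{p+r, q+r-1}$. This is the standard hypercohomology spectral sequence arising from the Postnikov filtration of the fibrant replacement of $\sE$, constructed in this level of generality by Thomason \cite[1.36]{Thomason1} and reviewed in the $cdh$-setting in \cite[Section~5]{CHSW}.

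Second, I would specialize to $\sE = L^n$. By \corref{cor:descentL}, $L^n$ satisfies $cdh$-descent, so the canonical map $L^n(X) \to \H_{cdh}(X, L^n)$ is a weak equivalence of spectra for every $X \in Sch/k$. Consequently $\pi_{q-p}\H_{cdh}(X, L^n) \cong L^n_{q-p}(X)$, and the abutment of the spectral sequence is the desired one.

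The remaining point is strong convergence. The $cdh$-cohomological dimension of a $k$-variety $X$ of dimension $d$ is at most $d$, a fact proved by Suslin--Voevodsky and reinterpreted in the $cd$-structure formalism of Voevodsky \cite{Voev}. Hence $E_2^{p,q}$ vanishes for $p < 0$ and for $p > d$, so only finitely many columns contribute in each total degree, which upgrades the conditional convergence of the Postnikov spectral sequence to strong convergence. The core content of the statement — the descent of $L^n$ itself — has already been established in \corref{cor:descentL}, so the main obstacle here is really just citing the bound on $cdh$-cohomological dimension; the rest is an assembly of standard machinery.
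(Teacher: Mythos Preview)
Your proposal is correct and follows essentially the same route as the paper: invoke \corref{cor:descentL} for the $cdh$-descent of $L^n$, and use the bound on $cdh$-cohomological dimension (which the paper cites as \cite[Theorem~12.5]{SuslinV}) to obtain strong convergence of the descent spectral sequence. Your write-up simply spells out the standard hypercohomology machinery that the paper leaves implicit.
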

\begin{proof} This follows immediately from Corollary~\ref{cor:descentL}
and the fact that the $cdh$-cohomological dimension of $X$ is bounded by
the Krull dimension of $X$ ({\sl cf.} \cite[Theorem~12.5]{SuslinV}.
\end{proof}

\section{Vanishing and homotopy invariance for $L^n$}  
Following \cite{CHSW}, we let ${\wt{C}}_j{\sE}$ denote the homotopy
cofiber of the natural map $\sE \to {\sE}\left(- \times {\A}^j\right)$
for any presheaf of spectra $\sE$ on $Sch/k$. Note that 
${\sE}\left(- \times {\A}^j\right)$ is a canonical direct sum of
$\sE$ and ${\wt{C}}_j{\sE}$ and hence the functor ${\wt{C}}_j$
preserves the homotopy fibration sequences. In particular, we
get a presheaf of fibration sequences
\begin{equation}\label{eqn:Cfib}
{\wt{C}}_j{L^n} \to {\wt{C}}_j{K/{p^n}} \to {\wt{C}}_j
{TC/{p^n}\left(-;p\right)}.
\end{equation} 
Furthermore, since $L^n$ and $L^n\left(- \times {\A}^j\right)$ satisfy
$cdh$-descent, we see that ${\wt{C}}_j{L^n}$ also satisfies  
$cdh$-descent. 
\begin{lem}\label{lem:vanishL}
For a $d$-dimensional variety $X$, one has 
$L^n_i(X) = 0 = {{\pi}_i{\wt{C}}_j{L^n}}(X)$ for all $j \ge 0$ and
$i < -d-2$.
\end{lem}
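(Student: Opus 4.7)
The plan is to leverage the $cdh$-descent of $L^n$ established in \corref{cor:descentL} together with a computation of $\pi_*(L^n)$ on smooth varieties. \corref{cor:descentL*} gives a strongly convergent spectral sequence
$E_2^{p,q} = H^p_{cdh}(X, a_{cdh}\pi_q(L^n)) \Rightarrow L^n_{q-p}(X)$
with $E_2^{p,q} = 0$ for $p > d$. If I can show that $a_{cdh}\pi_q(L^n) = 0$ for every $q \le -3$, then every bidegree $(p,q)$ with $0 \le p \le d$ and $q - p \le -d - 3$ must satisfy $q = i + p \le -3$, so the corresponding $E_2^{p,q}$ vanishes, and $L^n_i(X) = 0$ for all $i < -d - 2$.

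By the assumed resolution of singularities, every variety admits a $cdh$-cover by smooth ones, so it is enough to verify $\pi_q(L^n)(U) = 0$ for $U$ smooth over $k$ and $q \le -3$. On such a $U$, Quillen's theorem (combined with Thomason's identification of Bass and non-connective $K$-theory) gives $K_q(U) = 0$ for $q < 0$, whence the universal coefficient sequence yields $K_q(U, \Z/p^n) = 0$ for $q \le -1$. On the topological cyclic side, the Geisser-Hesselholt description of $TR^n(U;p)$ via the de Rham-Witt complex shows that $TR^n(U;p)$ is connective, and the homotopy equalizer defining $TC$ then forces $TC_q(U;p,\Z/p^n) = 0$ for $q \le -2$. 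Substituting these vanishings into the long exact sequence of the fibration $L^n \to K/p^n \to TC/p^n(-;p)$ will produce $\pi_q(L^n)(U) = 0$ for every $q \le -3$.

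For $\widetilde{C}_j L^n$ the same strategy applies. As noted just before the statement of the lemma, $\widetilde{C}_j L^n$ also satisfies $cdh$-descent, so the analogous spectral sequence reduces the claim to showing $\pi_q(\widetilde{C}_j L^n)(U) = 0$ on smooth $U$ for $q \le -3$. Since $U \times \A^j$ is itself smooth, the canonical splitting $L^n(U \times \A^j) \simeq L^n(U) \oplus \widetilde{C}_j L^n(U)$ realizes $\pi_q(\widetilde{C}_j L^n)(U)$ as a direct summand of $\pi_q(L^n)(U \times \A^j)$, which vanishes by the previous paragraph. The main obstacle in the plan is pinning down the sharp vanishing range of $TC(U;p,\Z/p^n)$ in negative degrees on smooth $U$; once the connectivity of $TR^n(U;p)$ is invoked, what remains is a routine diagram chase combined with the cohomological-dimension bound on the $cdh$-topology.
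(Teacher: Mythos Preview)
Your approach is essentially the paper's: use the $cdh$-descent spectral sequence of \corref{cor:descentL*} together with the cohomological-dimension bound to reduce to the vanishing of $a_{cdh}\pi_q(L^n)$ (and of its ${\wt C}_j$-variant) for $q\le -3$, and then deduce this from the fibration $L^n\to K/p^n\to TC/p^n(-;p)$ using that smooth schemes have no negative $K$-theory and that $TC$ with finite coefficients vanishes in degrees $\le -2$. The ${\wt C}_j$-case via the splitting on $U\times\A^j$ is exactly what the paper does as well.

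The one place where your argument is thinner than the paper's is the $TC$ input. The paper simply quotes Hesselholt's result that $TC_i(A;p)=0$ for $i<-1$ for \emph{any} ring $A$, and then passes to finite coefficients. Your route---connectivity of $TR^n(U;p)$ via the de~Rham--Witt identification---is more than you need (connectivity of $TR^n(A;p)$ holds for every ring, since $THH(A)$ is connective), and it does not by itself give $TC_{-2}(U;p,\Z/p^n)=0$: from connectivity of each $TR^n$ you only get $\pi_q(TC^n)=0$ for $q\le -2$, and the Milnor sequence for $TC=\holim_n TC^n$ then leaves a potential $\varprojlim^1\pi_{-1}(TC^n)$ contribution in degree $-2$. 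You need exactly this degree to conclude $L^n_{-3}=0$. So either invoke Hesselholt's vanishing directly (as the paper does), or add the extra step that kills this $\varprojlim^1$; without it your ``routine diagram chase'' stops one degree short.
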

\begin{proof} Using Corollaries~\ref{cor:descentL} and ~\ref{cor:descentL*}
and \cite[Theorem~12.5]{SuslinV}, it suffices to show that
$a_{cdh}{\pi}_q(L^n)$ and $a_{cdh}{\pi}_q\left({\wt{C}}_j{L^n}\right)$
are zero for $i < -2$. The presheaf of fibration sequences
~\ref{eqn:trace} gives the long exact sequence of presheaves of
homotopy groups on $Sch/k$
\[
\cdots \to L^n_i \to {K/{p^n}}_i \to {TC/{p^n}}_i \left(-;p\right)
\to L^n_{i-1} \to \cdots.
\]
Since the sheafification is an exact functor, we get the corresponding
long exact sequence of $cdh$-sheaves  
\begin{equation}\label{eqn:exact1}
\cdots \to a_{cdh}L^n_i \to a_{cdh}{K/{p^n}}_i \to a_{cdh}
{TC/{p^n}}_i \left(-;p\right) \to a_{cdh}L^n_{i-1} \to \cdots.
\end{equation}
We similarly get a long exact sequence of $cdh$-sheaves 
\begin{equation}\label{eqn:exact2}
\cdots \to a_{cdh}{\pi}_i\left({\wt{C}}_j{L^n}\right) \to 
a_{cdh}{\pi}_i\left({\wt{C}}_j{K/{p^n}}\right) \to a_{cdh}
{\pi}_i\left({\wt{C}}_j{TC/{p^n}}\left(-;p\right)\right)
\end{equation}
\[
\hspace*{8cm} \to 
a_{cdh}{\pi}_{i-1}\left({\wt{C}}_j{L^n}\right)  \to \cdots.
\]
Since the smooth schemes have no non-zero negative $K$-theory,
we have $a_{cdh}{K/{p^n}}_i = 0$ for $i < 0$ and hence there are 
isomorphisms
\begin{equation}\label{eqn:exact3}
_{cdh}
{TC/{p^n}}_i \left(-;p\right) \xrightarrow{\cong} a_{cdh}L^n_{i-1} 
\ \ {\rm and}
\end{equation}
\[
{\pi}_i\left({\wt{C}}_j{TC/{p^n}}\left(-;p\right)\right)
\xrightarrow{\cong} a_{cdh}{\pi}_{i-1}\left({\wt{C}}_j{L^n}\right)
\ \ {\rm for} \ \ i < 0.
\]
Thus it suffices to show that the left terms of both the isomorphisms
vanish for $i < -1$. For this, it suffices to show that 
$TC_i\left(A;p, {\Z}/{p^n}\right) = 0$ for $i < -1$ for any ring $A$
which is essentially of finite type over $k$. One knows from
a result of Hesselholt ({\sl cf.} \cite{Hessel1}) that 
$TC_i\left(A;p\right) = 0$ for $i < -1$ and the same conclusion then holds with
finite coefficients by the exact sequence
\[
TC_i\left(A;p\right) \to TC_i\left(A;p, {\Z}/{p^n}\right) \to
TC_{i-1}\left(A;p\right).
\]
\end{proof}  
\begin{lem}\label{lem:spectral1}
Let $X$ be a $k$-variety of dimension $d$. Then there are natural isomorphisms
\[
H^d_{\sC}\left(X, a_{\sC}{\pi}_{-1} 
\left(TC/{p^n}\left(-;p\right)\right)\right) 
\xrightarrow{\cong} {\H}^{d+1}_{\sC}
\left(X, TC/{p^n}\left(-;p\right)\right),
\]
\[
H^d_{\sC}\left(X, a_{\sC}{\pi}_{-1} 
\left({\wt{C}}_j TC/{p^n}\left(-;p\right)\right)\right) 
\xrightarrow{\cong} {\H}^{d+1}_{\sC} \left(X, {\wt{C}}_j 
{TC/{p^n}}\left(-;p\right)\right)
\]
for ${\sC}$ being the Zariski or the $cdh$-site.
\end{lem}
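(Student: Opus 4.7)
The plan is to apply the descent spectral sequence
\[
E_2^{p,q} = H^p_{\sC}(X, a_{\sC}\pi_q \sE) \Rightarrow \H^{p-q}_{\sC}(X, \sE)
\]
to $\sE = TC/{p^n}(-;p)$ (and then to $\wt{C}_j TC/{p^n}(-;p)$), and to show that the only surviving $E_2$-term on the antidiagonal $p - q = d+1$ sits at $(p,q) = (d,-1)$, with no nontrivial incoming or outgoing differentials. The asserted isomorphism then reads off as the edge map.

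Two vanishing bounds drive the argument. On the cohomological side, $H^p_{\sC}(X, -) = 0$ for $p > d$: for the Zariski site this is classical, and for the $cdh$-site this is \cite[Theorem~12.5]{SuslinV}, already invoked in \corref{cor:descentL*} to guarantee strong convergence. On the homotopical side, $a_{\sC}\pi_q(TC/{p^n}(-;p)) = 0$ for $q \leq -2$; this is exactly the vanishing used at the end of the proof of \lemref{lem:vanishL}, coming from Hesselholt's result that $TC_i(A;p) = 0$ for $i < -1$ combined with the mod $p^n$ exact sequence. Combining the two constraints, $0 \leq p \leq d$ and $q \geq -1$ on the antidiagonal $p - q = d+1$ forces $(p,q) = (d,-1)$, giving the candidate $H^d_{\sC}(X, a_{\sC}\pi_{-1}(TC/{p^n}(-;p)))$.

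To conclude, I check that the differentials $d_r : E_r^{p,q} \to E_r^{p+r,q+r-1}$ at $(d,-1)$ are trivial for all $r \geq 2$: outgoing differentials land in bidegree $(d+r, r-2)$, which vanishes by the cohomological-dimension bound, while incoming differentials originate from $(d-r,-r)$, which vanishes by the Hesselholt vanishing since $-r \leq -2$. Hence the edge map is the desired isomorphism. For the $\wt{C}_j$ statement, the same argument goes through once one observes that $a_{\sC}\pi_q(\wt{C}_j TC/{p^n}(-;p)) = 0$ for $q \leq -2$; this follows from the long exact sequence attached to the homotopy cofiber sequence $TC/{p^n} \to TC/{p^n}(- \times \A^j) \to \wt{C}_j TC/{p^n}$, whose outer terms vanish in these degrees by Hesselholt. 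The argument is essentially bookkeeping and presents no real obstacle; the only point requiring some care is the strong convergence of the spectral sequence on the $cdh$-site, which is precisely what the dimension bound of \cite{SuslinV} secures.
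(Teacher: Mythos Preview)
Your argument is correct and is essentially the same as the paper's: both invoke the descent spectral sequence together with the cohomological-dimension bound and Hesselholt's vanishing $TC_i(A;p,\Z/p^n)=0$ for $i<-1$ to isolate the single term $E_2^{d,-1}$ on the relevant antidiagonal. Your treatment is in fact a bit more explicit, spelling out the vanishing of all incoming and outgoing differentials at $(d,-1)$ where the paper simply asserts that the spectral sequence ``degenerates enough.''
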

\begin{proof} Since the Zariski or the $cdh$-cohomological dimension
of $X$ is bounded by $d$, one has 
a strongly convergent spectral sequence
\begin{equation}\label{eqn:spec2}
E^{s,t}_2 = H^s_{\sC}\left(X, a_{Zar}{\pi}_t \left(
TC/{p^n}\left(-;p\right)\right)\right)
\Rightarrow {\H}^{s-t}_{\sC}
\left(X, TC/{p^n}\left(-;p\right)\right)
\end{equation}
and the similar spectral sequence holds for the homotopy groups of the
${\wt{C}}_j$ functors. 

Since $TC_i\left(A;p, {\Z}/{p^n}\right) = 0$ for any ring $A$ and for
any $i < -1$ as mentioned above, we conclude that for $s < d$, one has
$-d-1+s < -1$ and hence
\[
H^s_{\sC}\left(X, a_{\sC}{\pi}_{-d-1+s} \left(
TC/{p^n}\left(-;p\right)\right)\right) = 0 =
H^s_{\sC}\left(X, a_{\sC}{\pi}_{-d-1+s} 
\left({\wt{C}}_j TC/{p^n}\left(-;p\right)\right)\right) 
\]
and also $E^{d-2, -2}_2 = 0$. Hence the above spectral sequence degenerates
enough to give the desired isomorphisms.
\end{proof} 
\begin{lem}\label{lem:specL}
Let $X$ be as in Lemma~\ref{lem:spectral1}. Then the natural maps
\[
H^d_{cdh}\left(X, a_{cdh}{\pi}_{-2}(L^n)\right)
\to {\H}^{d+2}_{cdh}
\left(X, L^n\right),
\]
\[
H^d_{cdh}\left(X, a_{cdh}{\pi}_{-2} 
\left({\wt{C}}_j L^n\right)\right) 
\to {\H}^{d+2}_{cdh} \left(X, {\wt{C}}_j L^n\right)
\]
are isomorphisms.
\end{lem}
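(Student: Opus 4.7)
The plan is to mirror the argument of Lemma~\ref{lem:spectral1}, with $L^n$ in place of $TC/p^n(-;p)$ and the relevant index shifted down by one. The starting point is the strongly convergent hypercohomology spectral sequence
\[
E^{s,t}_2 = H^s_{cdh}\!\left(X, a_{cdh}\pi_t(L^n)\right) \Rightarrow \H^{s-t}_{cdh}(X, L^n),
\]
which is available because $L^n$ satisfies $cdh$-descent by Corollary~\ref{cor:descentL}. I would then combine two vanishing inputs: (i) the $cdh$-cohomological dimension bound $H^s_{cdh}(X,-)=0$ for $s>d$ coming from \cite[Theorem~12.5]{SuslinV}, and (ii) the sheaf-level vanishing $a_{cdh}\pi_t(L^n)=0$ for $t<-2$. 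Item (ii) is exactly what the isomorphism ~\ref{eqn:exact3} together with Hesselholt's theorem $TC_i(A;p,\Z/p^n)=0$ for $i<-1$ yields in the course of proving Lemma~\ref{lem:vanishL}, so it is already in hand.

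Next I would read off the contributions of $E_2$ on the diagonal $s-t=d+2$. For $s<d$ the second index is $t=s-d-2<-2$, so $E^{s,t}_2=0$ by (ii); for $s>d$, $E^{s,t}_2=0$ by (i). Only $E^{d,-2}_2$ can survive. To conclude, I would check that no higher differential touches this entry: an outgoing $d_r$ lands at bidegree $(d+r,r-3)$ which is killed by (i), and an incoming $d_r$ emanates from $(d-r,-r-1)$ with $-r-1\le -3<-2$ for $r\ge 2$, which is killed by (ii). The filtration of $\H^{d+2}_{cdh}(X,L^n)$ therefore collapses to the single quotient $E^{d,-2}_\infty=E^{d,-2}_2$, producing the first edge isomorphism.

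For the second isomorphism I would repeat the argument verbatim with $\wt{C}_j L^n$ in place of $L^n$. The presheaf $\wt{C}_j L^n$ also satisfies $cdh$-descent, as already observed right after ~\ref{eqn:Cfib}, so the same hypercohomology spectral sequence is at our disposal. The required vanishing $a_{cdh}\pi_t(\wt{C}_j L^n)=0$ for $t<-2$ is obtained from the long exact sequence ~\ref{eqn:exact2} together with the vanishing of $\pi_i(\wt{C}_j TC/p^n(-;p))$ for $i<-1$, which in turn reduces to Hesselholt's bound applied to the polynomial rings $A[T_1,\ldots,T_j]$.

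I do not expect a genuine obstacle here; the whole argument is a short diagram chase on a spectral sequence once the two vanishing bounds are isolated. The one point that deserves care is extracting the sheaf-level statement $a_{cdh}\pi_t(L^n)=0$ for $t<-2$ from the proof of Lemma~\ref{lem:vanishL}, since the published statement of that lemma only records the weaker global consequence on $L^n_i(X)$; happily, ~\ref{eqn:exact3} delivers precisely this sheaf-level vanishing.
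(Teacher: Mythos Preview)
Your proposal is correct and follows essentially the same route as the paper: establish the sheaf-level vanishing $a_{cdh}\pi_t(L^n)=0$ (and of $\wt{C}_jL^n$) for $t<-2$ via the long exact sequences \eqref{eqn:exact1}--\eqref{eqn:exact3} together with Hesselholt's bound on $TC$, then feed this and the cohomological-dimension bound into the hypercohomology spectral sequence \eqref{eqn:spec2} to collapse it at $E^{d,-2}_2$. Your write-up is in fact a bit more explicit than the paper's (you spell out the checking of incoming and outgoing differentials), but the argument is the same.
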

\begin{proof} Since the smooth schemes have no negative $K$-theory,
we have $a_{cdh}{K/{p^n}}_i = 0$ for $i < 0$. We have also
seen above that $a_{cdh}{TC/{p^n}}_i \left(-;p\right) = 0$
for $i < -1$, and the same vanishing holds for the homotopy groups of the
${\wt{C}}_j$-functors. We conclude from the exact sequences 
~\ref{eqn:exact1} and ~\ref{eqn:exact2} that
$a_{cdh}L^n_{i} = 0 = a_{cdh}{\pi}_{i}\left({\wt{C}}_j{L^n}\right)$
for $i < -2$. The spectral sequence ~\ref{eqn:spec2} now implies the
lemma.
\end{proof} 

\section{Vanishing and homotopy invariance for $K/{p^n}$}
In this section, we prove the vanishing results for some negative
homotopy groups of $K/{p^n}$ and ${\wt{C}}_j {K/{p^n}}$ using the
results of the previous section. We begin with the following result
about the sheaves of rings of Witt vectors. For a $k$-variety $X$,
let $W{\sO}_X$ denote the sheaf of Witt vectors on $X$ ({\sl cf.}
\cite{Illusie}). The sheaf $W{\sO}$ is a sheaf of rings on the category
$Sch/k$.
\begin{lem}\label{lem:Wvectors}
For any $k$-variety of dimension $d$ and for any $n \ge 1$, the natural maps
\begin{equation}\label{eqn:Witt1}
H^d_{Zar}\left(X, {W{\sO}_X}/{p^n}\right) \to
H^d_{cdh}\left(X, {W{\sO}_X}/{p^n}\right) 
\end{equation}
\begin{equation}\label{eqn:Witt2}
H^d_{Zar}\left(X, {\wt{C}}_j {W{\sO}_X}/{p^n}\right) \to
H^d_{cdh}\left(X, {\wt{C}}_j {W{\sO}_X}/{p^n}\right)
\end{equation}
are surjective.
\end{lem}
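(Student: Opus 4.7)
The plan is to prove both surjectivity statements by induction on $d = \dim X$, reducing via resolution of singularities to a smooth base case where the comparison map is an isomorphism. I focus on the $W\sO_X/p^n$ statement; the $\wt{C}_j$ variant is handled identically, since $\wt{C}_j$ is defined by a homotopy cofiber, preserves the descent and Mayer--Vietoris properties invoked below, and $X \times \A^j$ has the same dimension as $X$.

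For the smooth base case, each finite truncation $W_m\sO_X/p^n$ satisfies Nisnevich descent via the methods of \cite{GH1}. It also satisfies Mayer--Vietoris for smooth blow-ups: one can extract this from the blow-up invariance of topological cyclic homology along regular embeddings \cite[Theorem~1.4]{BM}, or from the classical blow-up formula in de Rham--Witt cohomology \`a la Illusie. Since the $scdh$-topology on $Sm/k$ is generated by Nisnevich squares and smooth blow-ups, these two descent properties give $H^p_{Zar}(X, W_m\sO/p^n) \cong H^p_{cdh}(X, W_m\sO/p^n)$ for every $p$ when $X$ is smooth. Passing to $\varprojlim_m$ and using that the transition maps $W_{m+1}\sO/p^n \twoheadrightarrow W_m\sO/p^n$ are surjective (so $\varprojlim^1$ vanishes) transfers the isomorphism to $W\sO_X/p^n$.

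For singular $X$ of dimension $d$, assume inductively that the lemma holds for varieties of dimension $<d$. Resolution of singularities supplies an abstract blow-up
\[
\xymatrix{Y' \ar[r] \ar[d] & X' \ar[d]^\pi \\ Y \ar[r] & X}
\]
with $X'$ smooth and $\max(\dim Y, \dim Y') < d$. The $cdh$-Mayer--Vietoris sequence, combined with the Suslin--Voevodsky vanishing $H^d_{cdh}(Z, -) = 0$ for $\dim Z < d$ (\cite[Theorem~12.5]{SuslinV}), yields the exact sequence
\[
H^{d-1}_{cdh}(Y', W\sO/p^n) \to H^d_{cdh}(X, W\sO/p^n) \to H^d_{cdh}(X', W\sO/p^n) \to 0,
\]
whose rightmost term equals $H^d_{Zar}(X', W\sO/p^n)$ by the smooth case. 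Lifting a $cdh$-class on $X$ to a Zariski class then amounts to pulling the $X'$-restriction back through $\pi^*\colon H^d_{Zar}(X, W\sO/p^n) \to H^d_{Zar}(X', W\sO/p^n)$ and absorbing the $H^{d-1}_{cdh}(Y', W\sO/p^n)$-indeterminacy via the inductive hypothesis applied to $Y'$; this delivers exactly the claimed surjectivity.

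The main obstacle is the smooth-case descent for $W\sO/p^n$ along smooth blow-ups, which is the technical ingredient that distinguishes the positive-characteristic argument from its characteristic-zero counterpart in \cite{CHSW}; compatibility of the blow-up formula with the mod-$p^n$ reduction and with the $\varprojlim_m$ must be verified carefully. Once this input is in hand, the inductive diagram chase in the singular case is routine, thanks to the $cdh$-cohomological dimension bound killing the higher-degree error terms.
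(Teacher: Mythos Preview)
Your approach---induction on $\dim X$ with a smooth base case---is genuinely different from the paper's, which instead inducts on $n$: for $n=1$ it uses the surjection $W\sO/p \twoheadrightarrow \sO$ with square-zero kernel $\sI$, so that $H^*_{cdh}(X,W\sO/p)\cong H^*_{cdh}(X,\sO)$ by \cite[Lemma~12.1]{SuslinV}, and then invokes \cite[Theorem~6.1]{CHSW} (valid in characteristic $p$ under resolution) for the surjectivity of $H^d_{Zar}(X,\sO)\to H^d_{cdh}(X,\sO)$; the step from $n-1$ to $n$ is a five-lemma argument using the short exact sequence $0\to W\sO/p^{n-1}\to W\sO/p^n\to W\sO/p\to 0$ and right-exactness of $H^d$.

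Your inductive step, however, has a real gap. On the $cdh$ side you correctly extract the exact sequence
\[
H^{d-1}_{cdh}(Y',W\sO/p^n)\to H^d_{cdh}(X,W\sO/p^n)\to H^d_{cdh}(X',W\sO/p^n)\to 0,
\]
but there is no parallel Zariski sequence to compare it with: Zariski cohomology does \emph{not} satisfy Mayer--Vietoris for abstract blow-ups. Concretely, your phrase ``pulling the $X'$-restriction back through $\pi^*$'' presumes that $\pi^*\colon H^d_{Zar}(X,W\sO/p^n)\to H^d_{Zar}(X',W\sO/p^n)$ is surjective, which you have not shown (and which, via the Leray spectral sequence, amounts to controlling $R^q\pi_*(W\sO_{X'}/p^n)$ for $q>0$---essentially the content of a formal function theorem for Witt vectors). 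Even granting that, ``absorbing the $H^{d-1}_{cdh}(Y')$-indeterminacy'' requires a Zariski boundary map $H^{d-1}_{Zar}(Y')\to H^d_{Zar}(X)$, which simply does not exist for an abstract blow-up square. So the diagram chase you describe cannot be carried out as written.

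This is precisely the difficulty that \cite[Theorem~6.1]{CHSW} overcomes for $\sO_X$ using Thomason's blow-up results and the formal function theorem; the paper's strategy is to reduce to that case rather than redo the argument for $W\sO/p^n$. If you want to salvage your route, you would need an analogue of that theorem for Witt sheaves, which is substantially more work than the lemma itself.
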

\begin{proof} We prove by induction on $n \ge 1$.
For $n = 1$, there is a canonical surjective map of ${\F}_p$-algebras
${W{\sO}}/{p} \surj {\sO}$ on $Sch/k$ such that the kernel ${\sI}$ is a sheaf 
of square zero ideals by \cite[page 9]{Lubkin}. Hence by
\cite[Lemma~12.1]{SuslinV}, the map
$H^i_{cdh}\left(X, {W{\sO}_X}/{p}\right) \to 
H^i_{cdh}\left(X, {\sO}_X\right)$ is an isomorphism for all $i \ge 0$. 
In particular, we get $H^d_{cdh}\left(X, {\sI}_X\right) = 0$. 
Thus we get a 
commutative diagram of cohomology groups with exact rows
\[
\xymatrix{
H^d_{Zar}\left(X, {\sI}_X\right) \ar[r] & 
H^d_{Zar}\left(X, {W{\sO}_X}/{p}\right) \ar[r] \ar[d] &
H^d_{Zar}\left(X, {\sO}_X\right) \ar[r] \ar[d] & 0 \\
& H^d_{cdh}\left(X, {W{\sO}_X}/{p}\right) \ar[r]^{\cong} &
H^d_{cdh}\left(X, {\sO}_X\right) & }
\]
The right vertical map is surjective by \cite[Theorem~6.1]{CHSW}.
We remark here that Theorem~6.1 of \cite{CHSW} is proved when the
base field is of characteristic zero. However exactly the same 
argument works even if $k$ is of positive characteristic as long as
the resolution of singularities holds over $k$, which we have assumed
throughout. Only extra ingredients needed are the results of
\cite{Thomason3} and the formal function theorem which are characteristic
free. We refer to {\sl loc. cit.} for details of the proof.  
We conclude from the above diagram that ~\ref{eqn:Witt1} holds for $n = 1$.

Now we assume that $n \ge 2$ and the surjectivity holds in 
~\ref{eqn:Witt1} for $m \le n-1$.
We have the exact sequence of the sheaves of abelian groups on $Sch/k$
\[
0 \to {W{\sO}}/{p^{n-1}} \to {W{\sO}}/{p^n} \to {W{\sO}}/{p} \to 0.
\]
Considering the cohomology, we get the following commutative diagram 
with exact rows (since $H^d$ is right exact on the Zariski or the
$cdh$-site of $X$).
\[
\xymatrix{
H^d_{Zar}\left(X, {W{\sO}_X}/{p^{n-1}}\right) \ar[r] \ar[d] &
H^d_{Zar}\left(X, {W{\sO}_X}/{p^n}\right) \ar[r] \ar[d] &
H^d_{Zar}\left(X, {W{\sO}_X}/{p}\right) \ar[r] \ar[d] & 0 \\ 
H^d_{cdh}\left(X, {W{\sO}_X}/{p^{n-1}}\right) \ar[r] &
H^d_{cdh}\left(X, {W{\sO}_X}/{p^n}\right) \ar[r] &
H^d_{cdh}\left(X, {W{\sO}_X}/{p}\right) \ar[r] & 0}
\]
The right and the left vertical arrows are surjective by induction.
Hence the middle vertical arrow is also surjective. This proves
the surjectivity of the map in ~\ref{eqn:Witt1}.

To prove the surjectivity of the map in ~\ref{eqn:Witt2}, we apply
exactly the same argument of induction as above and observe that
for $n = 1$, the map
${\wt{C}}_j W{\sO} \to {\wt{C}}_j {\sO}$ is surjective such that its kernel
${\wt{C}}_j {\sI}$ is of the form ${\sI} {\otimes}_{{\F}_p} V$ for some 
${\F}_p$-vector space $V$, and hence $H^d_{cdh}\left(X, {\wt{C}}_j {\sI}
\right) \cong H^d_{cdh}\left(X, {\sI}\right) {\otimes}_{{\F}_p} V = 0$.
In particular, we get isomorphism 
$H^d_{cdh}\left(X, {\wt{C}}_j {W{\sO}_X}/{p}\right) 
\xrightarrow{\cong} H^d_{cdh}\left(X, {\wt{C}}_j {\sO}_X\right)$.
Moreover, the map $H^d_{Zar}\left(X, {\wt{C}}_j {\sO}_X\right)
\to H^d_{cdh}\left(X, {\wt{C}}_j {\sO}_X\right)$ is surjective
by \cite[Proof of Theorem~6.2]{CHSW}. The proof now follows from the
induction.
\end{proof}
\begin{prop}\label{prop:crucial}
Let $X$ be a $k$-variety of dimension $d$. Then the natural maps
\[
H^d_{Zar}\left(X, a_{Zar}{\pi}_{-1} \left(
TC/{p^n}\left(-;p\right)\right)\right) \to
H^d_{cdh}\left(X, a_{cdh}{\pi}_{-1} \left(
TC/{p^n}\left(-;p\right)\right)\right);
\]
\[
H^d_{Zar}\left(X, a_{Zar}{\pi}_{-1} 
\left({\wt{C}}_j TC/{p^n}\left(-;p\right)\right)\right) \to 
H^d_{cdh}\left(X, a_{cdh}{\pi}_{-1} 
\left({\wt{C}}_j TC/{p^n}\left(-;p\right)\right)\right)
\]
are surjective.
\end{prop}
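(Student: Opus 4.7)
The plan is to identify the presheaf $\pi_{-1}(TC/{p^n}(-;p))$ with an explicit quotient of the Witt vector sheaf $W\sO/{p^n}$ on $Sch/k$, and then transfer the surjectivity statement of Lemma~\ref{lem:Wvectors} to the conclusion of the proposition. The $\wt{C}_j$-variant is handled by the same strategy after applying $\wt{C}_j$ termwise, since $\wt{C}_j$ is exact on cofiber sequences of presheaves of spectra.

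The first step is the identification. Recall that $TC(A;p)$ sits in a homotopy fibre sequence
$$TC(A;p) \to TR(A;p) \xrightarrow{1 - F} TR(A;p),$$
obtained from the equalizer definition of $TC$ by using that the structure maps of $TR=\holim_R TR^n$ make the restriction act as the identity. By Hesselholt's calculations, $\pi_0 TR(A;p) \cong W(A)$ and $\pi_i TR(A;p) = 0$ for $i\le -1$ whenever $A$ is essentially of finite type over $k$, and these calculations remain valid mod $p^n$ by the same exact sequence argument used in the proof of Lemma~\ref{lem:vanishL}. The long exact sequence of homotopy groups with mod $p^n$ coefficients then yields a natural identification
$$\pi_{-1}\bigl(TC(A;p)/p^n\bigr) \cong \cok\bigl(1 - F : W(A)/p^n \to W(A)/p^n\bigr),$$
and in particular a natural surjection of presheaves $W\sO/p^n \surj \pi_{-1}(TC/p^n(-;p))$ on $Sch/k$. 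Applying the same reasoning to the fibre sequence $\wt{C}_j TC(A;p) \to \wt{C}_j TR(A;p) \xrightarrow{1-F} \wt{C}_j TR(A;p)$ produces the analogous surjection $\wt{C}_j(W\sO/p^n) \surj \pi_{-1}(\wt{C}_j TC/p^n(-;p))$.

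The second step is to sheafify in both the Zariski and $cdh$ topologies and take $H^d$. Since sheafification is exact, we obtain surjections $W\sO_X/p^n \surj a_\sC \pi_{-1}(TC/p^n(-;p))$ for $\sC \in \{\Zar,\,cdh\}$, and likewise for the $\wt{C}_j$-versions. Because both topologies have cohomological dimension at most $d$ on $X$ (using \cite[Theorem~12.5]{SuslinV} for the $cdh$-side), the functor $H^d$ is right exact on both sites, so these surjections persist after taking $H^d$. Assembling everything into the commutative square
$$\xymatrix{
H^d_{Zar}(X, W\sO_X/p^n) \ar@{->>}[r] \ar[d] & H^d_{Zar}(X, a_{Zar}\pi_{-1}(TC/p^n(-;p))) \ar[d] \\
H^d_{cdh}(X, W\sO_X/p^n) \ar@{->>}[r] & H^d_{cdh}(X, a_{cdh}\pi_{-1}(TC/p^n(-;p)))}$$
whose left vertical arrow is surjective by Lemma~\ref{lem:Wvectors}, a short diagram chase yields surjectivity of the right vertical arrow. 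The $\wt{C}_j$-case is entirely analogous, using the second half of Lemma~\ref{lem:Wvectors}.

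The main obstacle is pinning down the identification in the first step: one must invoke the correct Hesselholt calculation of $\pi_i TR(A;p)$ in low degrees (including the vanishing for $i\le -1$) for rings essentially of finite type over $k$, verify it mod $p^n$, and check that the resulting Artin--Schreier style presentation of $\pi_{-1}(TC/p^n(-;p))$ is compatible with the $\wt{C}_j$-construction. Once that identification is in hand, the rest of the argument is a purely formal diagram chase driven by Lemma~\ref{lem:Wvectors}.
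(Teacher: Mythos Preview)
Your proposal is correct and follows essentially the same route as the paper. You derive the identification $\pi_{-1}(TC/{p^n}(-;p)) \cong \cok\bigl(1-F: W\sO/{p^n}\to W\sO/{p^n}\bigr)$ via the fibre sequence $TC\to TR\xrightarrow{1-F} TR$ and connectivity of $TR$, whereas the paper quotes this identification directly from Hesselholt; from there both arguments invoke right exactness of $H^d$ on the Zariski and $cdh$ sites and reduce to Lemma~\ref{lem:Wvectors} exactly as you do.
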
  
\begin{proof} Since $a_{\sC} {\pi}_{-2} \left(TC\left(-;p\right)\right)
= 0$ for $\sC$ being the Zariski or the $cdh$-cite, the exact
sequence
\[
a_{\sC} {\pi}_{-1} \left(TC\left(-;p\right)\right) \xrightarrow{p^n}
a_{\sC} {\pi}_{-1} \left(TC\left(-;p\right)\right) \to
a_{\sC}{\pi}_{-1} \left(TC/{p^n}\left(-;p\right)\right) \to
a_{\sC} {\pi}_{-2} \left(TC\left(-;p\right)\right)
\]
implies that $a_{\sC} {\pi}_{-1} \left({TC/{p^n}}\left(-;p\right)\right)
\cong {a_{\sC} {\pi}_{-1} \left(TC\left(-;p\right)\right)}/{p^n}$
and the same isomorphism holds for the ${\wt{C}}_j$-sheaves.
On the other hand, it is known ({\sl cf.} \cite{Hessel1}) that 
there are canonical isomorphisms
\[
a_{\sC}{\pi}_{-1} \left(TC\left(-;p\right)\right) \xrightarrow{\cong}
{\rm Coker}\left(a_{\sC}{W{\sO}_X} \xrightarrow{id-F}
a_{\sC}{W{\sO}_X}\right)
\] 
and similar isomorphism holds for the ${\wt{C}}_j$-sheaves, where
\[
{\wt{C}}_j W{\sO}_X = {\rm CoKer}\left(W{\sO}_{X[T_1, \cdots , T_j]}
\to W{\sO}_X\right).
\]
Combining the above two isomorphisms, we conclude that there
are canonical isomorphisms
\begin{equation}\label{eqn:Witt}
a_{Zar}{\pi}_{-1} \left(TC/{p^n}\left(-;p\right)\right)
\xrightarrow{\cong}
{\rm Coker}\left(a_{Zar}{W{\sO}_X}/{p^n} \xrightarrow{id-F}
a_{Zar}{W{\sO}_X}/{p^n}\right) \ \ {\rm and}
\end{equation}
\[
a_{cdh}{\pi}_{-1} \left(TC/{p^n}\left(-;p\right)\right)
\xrightarrow{\cong}
{\rm Coker}\left(a_{cdh}{W{\sO}_X}/{p^n} \xrightarrow{id-F}
a_{cdh}{W{\sO}_X}/{p^n}\right).
\]
The similar isomorphisms hold for the ${\wt{C}}_j$-sheaves. 
Since $H^d$ is right exact on the Zariski or the $cdh$-site of $X$,
it suffices to show that for every $n \ge 1$, the maps
\begin{equation}\label{eqn:Witt1*}
H^d_{Zar}\left(X, {W{\sO}_X}/{p^n}\right) \to
H^d_{cdh}\left(X, {W{\sO}_X}/{p^n}\right) 
\end{equation}
\begin{equation}\label{eqn:Witt2*}
H^d_{Zar}\left(X, {\wt{C}}_j {W{\sO}_X}/{p^n}\right) \to
H^d_{cdh}\left(X, {\wt{C}}_j {W{\sO}_X}/{p^n}\right)
\end{equation}
are surjective. But this is proved in Lemma~\ref{lem:Wvectors}.
\end{proof}
\begin{thm}\label{thm:main*}
Let $X$ be a $k$-variety of dimension $d$. Then 
$K_i\left(X, {\Z}/{p^n}\right) = 0 = 
{{\pi}_i {\wt{C}}_j{K/{p^n}}}(X)$ for $j \ge 0$ and $i < -d-1$.
\end{thm}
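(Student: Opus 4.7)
My plan is to exploit the long exact sequence of homotopy groups attached to the presheaf fibration (\ref{eqn:trace1}), which at $X$ reads
\[
\cdots \to \pi_i TC/{p^n}(X;p) \xrightarrow{\partial} L^n_{i-1}(X) \to K_{i-1}\left(X,\Z/{p^n}\right) \to \pi_{i-1} TC/{p^n}(X;p) \to \cdots,
\]
and to control the flanking $TC$- and $L^n$-terms in the range $i\le -d-2$. Because $TC/{p^n}\left(-;p\right)$ satisfies Zariski descent, the associated hypercohomology spectral sequence, together with the vanishing $\pi_t TC/{p^n}\left(A;p\right)=0$ for $t<-1$ (already recalled in the proof of Lemma~\ref{lem:vanishL}) and the Zariski cohomological dimension bound $s\le d$, gives $\pi_i TC/{p^n}(X;p)=0$ for $i<-d-1$. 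Combined with Lemma~\ref{lem:vanishL}, which supplies $L^n_i(X)=0$ for $i<-d-2$, this already forces $K_i\left(X,\Z/{p^n}\right)=0$ for $i<-d-2$.

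\textbf{The critical case $i=-d-2$.} Here the long exact sequence collapses to
\[
\pi_{-d-1} TC/{p^n}(X;p) \xrightarrow{\partial} L^n_{-d-2}(X) \to K_{-d-2}\left(X,\Z/{p^n}\right) \to 0,
\]
so everything hinges on the surjectivity of $\partial$. Lemma~\ref{lem:spectral1} (the Zariski case) identifies the source with $H^d_{\Zar}\left(X, a_{\Zar}\pi_{-1} TC/{p^n}\left(-;p\right)\right)$, while Lemma~\ref{lem:specL} identifies the target with $H^d_{cdh}\left(X, a_{cdh}\pi_{-2}(L^n)\right)$. Feeding (\ref{eqn:exact1}) with $a_{cdh}(K/{p^n})_i=0$ for $i<0$ yields a canonical isomorphism $a_{cdh}\pi_{-1}TC/{p^n}\left(-;p\right) \xrightarrow{\cong} a_{cdh}\pi_{-2}(L^n)$, so the target is also canonically $H^d_{cdh}\left(X, a_{cdh}\pi_{-1}TC/{p^n}\left(-;p\right)\right)$.

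\textbf{Factoring $\partial$ through the Zariski-to-cdh comparison.} The canonical map from each spectrum to its cdh-hypercohomology gives a morphism from the plain (Zariski) long exact sequence of (\ref{eqn:trace1}) at $X$ to the cdh-hypercohomology long exact sequence; by Corollary~\ref{cor:descentL} the $L^n$-column of this morphism is already a weak equivalence. Chasing the diagram shows that $\partial$ factors as
\[
H^d_{\Zar}\left(X, a_{\Zar}\pi_{-1}TC/{p^n}\left(-;p\right)\right) \to H^d_{cdh}\left(X, a_{cdh}\pi_{-1}TC/{p^n}\left(-;p\right)\right) \xrightarrow{\partial_{cdh}} L^n_{-d-2}(X).
\]
The first arrow is surjective by Proposition~\ref{prop:crucial}. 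The second is in fact an isomorphism, since both $\H^{d+1}_{cdh}\left(X,K/{p^n}\right)$ and $\H^{d+2}_{cdh}\left(X,K/{p^n}\right)$ vanish by Brown--Gersten on the cdh site (using $a_{cdh}(K/{p^n})_t=0$ for $t<0$ and cohomological dimension $\le d$). Hence $\partial$ is surjective, and $K_{-d-2}\left(X,\Z/{p^n}\right)=0$.

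\textbf{The $\wt{C}_j$ variant and the main obstacle.} The argument for $\wt{C}_j(K/{p^n})$ is formally identical: one replaces (\ref{eqn:trace1}) by (\ref{eqn:Cfib}), uses the $\wt{C}_j$-clauses of Lemmas~\ref{lem:spectral1} and~\ref{lem:specL} and of Proposition~\ref{prop:crucial}, and invokes the cdh-descent of $\wt{C}_j L^n$ noted after (\ref{eqn:Cfib}). The only nontrivial input in either case is the surjectivity of the top-degree Zariski-to-cdh comparison map for $\pi_{-1}TC/{p^n}\left(-;p\right)$ and its $\wt{C}_j$-analogue, and that is precisely the content of Proposition~\ref{prop:crucial}. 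Once that is granted, the remainder is formal diagram chasing through two parallel long exact sequences.
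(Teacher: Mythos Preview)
Your proof is correct and follows essentially the same route as the paper: both arguments establish the easy range $i<-d-2$ from Lemma~\ref{lem:vanishL} and the Zariski descent spectral sequence for $TC/p^n$, and then handle the critical degree $i=-d-2$ by showing the boundary map $\partial$ is surjective via the same ingredients (Lemmas~\ref{lem:spectral1} and~\ref{lem:specL}, Corollary~\ref{cor:descentL}, and Proposition~\ref{prop:crucial}). The only cosmetic difference is that the paper presents the argument as a three-row commutative diagram and invokes the sheaf isomorphism~(\ref{eqn:exact3}) for the lower left vertical, whereas you phrase it as a factorization of $\partial$ and deduce that $\partial_{cdh}$ is an isomorphism from the vanishing of $\H^{d+1}_{cdh}(X,K/p^n)$ and $\H^{d+2}_{cdh}(X,K/p^n)$; these are equivalent justifications of the same step.
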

\begin{proof} We first show that 
\begin{equation}\label{eqn:vanishTC}
TC_i\left(X;p, {\Z}/{p^n}\right) = 0 =
{{\pi}_i {\wt{C}}_j 
{TC/{p^n}}\left(-;p\right)}(X) \ \ {\rm for} \ \ 
i < -d-1.
\end{equation}   
The Zariski descent for $TC/{p^n}\left(-;p\right)$ and 
${\wt{C}}_j {TC/{p^n}} \left(-;p\right)$,
as shown in the proof of Corollary~\ref{cor:descentL},
gives a strongly convergent spectral sequence
\[
E^{s,t}_2 = H^s_{Zar}\left(X, a_{Zar}{\pi}_t \left(
TC/{p^n}\left(-;p\right)\right)\right)
\Rightarrow TC_{t-s}\left(X;p, {\Z}/{p^n}\right)
\]
and the similar spectral sequence holds for the homotopy groups of the
${\wt{C}}_j$ functors.
Thus it suffices to show that  
$TC_i\left(A;p, {\Z}/{p^n}\right) = 0$ for $i < -1$ for any ring $A$
which is essentially of finite type over $k$. But this has already been
shown above. This proves ~\ref{eqn:vanishTC}. 

The homotopy fibration sequence ~\ref{eqn:trace}
gives the long exact sequence of homotopy groups
\[
\cdots L^n_i(X) \to K_i\left(X, {\Z}/{p^n}\right) \to
TC_i\left(X;p, {\Z}/{p^n}\right) \to L^n_{i-1}(X) \to \cdots
\]
and one has a similar long exact sequence of the homotopy groups
of the functors ${\wt{C}}_j$.
Lemma~\ref{lem:vanishL} and ~\ref{eqn:vanishTC} together now imply that 
\[
K_i\left(X, {\Z}/{p^n}\right) = 0 =
{{\pi}_i{\wt{C}}_j{K/{p^n}}}(X) \ \ {\rm for} \ \ i < -d-2
\]
and there are exact sequences
\[
TC_{-d-1}\left(X;p, {\Z}/{p^n}\right) \to
L^n_{-d-2}(X) \to K_{-d-2}\left(X, {\Z}/{p^n}\right) \to 0,
\]
\[
{{\pi}_{-d-1} {\wt{C}}_j {TC/{p^n}}\left(-;p\right)}(X) \to
{{\pi}_{-d-2} {\wt{C}}_j L^n(X) \to
{{\pi}_{-d-2} {\wt{C}}_j{K/{p^n}}}}(X) \to 0.
\]
Thus we need to show that the first map in both the exact sequences
are surjective.  

We consider the following commutative diagram.
\[
\xymatrix@C.6pc{
H^d_{Zar}\left(X, a_{Zar}{\pi}_{-1} \left(
TC/{p^n}\left(-;p\right)\right)\right) \ar[d] \ar[r] &
{\H}^{d+1}_{Zar}
\left(X, TC/{p^n}\left(-;p\right)\right) \ar[d] &
TC_{-d-1}\left(X; p, {\Z}/{p^n}\right) \ar[l] \ar[dd] \\
H^d_{cdh}\left(X, a_{cdh}{\pi}_{-1} \left(
TC/{p^n}\left(-;p\right)\right)\right) \ar[d] \ar[r] &
{\H}^{d+1}_{cdh} \left(X, TC/{p^n}\left(-;p\right)\right) \ar[d] & \\
H^d_{cdh}\left(X, a_{cdh} {\pi}_{-2}(L^n)\right) \ar[r] &
{\H}^{d+2}_{cdh}\left(X, L^n\right) & L^n_{-d-2}(X) \ar[l] }
\]
The left horizontal arrows of all the rows are isomorphisms by 
Lemmas ~\ref{lem:spectral1} and ~\ref{lem:specL}. The right horizontal
arrow of the top row is an isomorphism by the Zariski descent of 
$TC$ ({\sl cf.} \cite{GH1}, \cite{BM}). The right horizontal arrow in the
bottom row is an isomorphism by Corollary~\ref{cor:descentL}.
The lower vertical arrow on the left column is an isomorphism by
~\ref{eqn:exact3}. The upper vertical arrow on the left column is 
surjective by Proposition~\ref{prop:crucial}. A diagram chase shows that
the long vertical arrow in the extreme right is surjective.
The surjectivity  of the map ${{\pi}_{-d-1} {\wt{C}}_j 
{TC/{p^n}}\left(-;p\right)}(X) \to {{\pi}_{-d-2} {\wt{C}}_j L^n(X)}$
follows exactly in the same way using Lemmas ~\ref{lem:spectral1}, 
~\ref{lem:specL}, Proposition~\ref{prop:crucial} and 
Corollary~\ref{cor:descentL}.
\end{proof}

\section{Vanishing and homotopy invariance for rational $K$-theory}
For any presheaf of spectra $\sE$ on $Sch/k$, let ${\sE}_{\Q}$ denote
the direct colimit over the multiplication maps
${\sE} \xrightarrow{n} {\sE}$ by positive integers ({\sl cf} 
\cite{Thomason2}). Then ${\sE}_{\Q}$ is a presheaf of spectra on
$Sch/k$ such that ${\pi}_i\left({\sE}_{\Q}\right) \cong 
{\pi}_i\left(\sE\right) {\otimes}_{\Z} {\Q}$ for $i \in {\Z}$.
Our goal now is prove the vanishing of the rational $K$-theory and
$K_{\Q}$-regularity in degrees below minus the dimension of a
$k$-variety, where $k$ is an infinite perfect field of positive
characteristic as before. Let ${\sK}_{\Q}$ and ${\sH}{\sC}_{\Q}$
denote the presheaves of rational $K$-theory and rational cyclic
homology spectra on $Sch/k$. Let ${\wt{{\sH}{\sN}}}_{\Q}$,
${\wt{{\sH}{\sP}}}_{\Q}$ and ${\wt{{\sH}{\sC}}}_{\Q}$ denote the presheaves
of spectra on $Sch/k$ given by $U \mapsto HN\left(U {\otimes} {\Q}\right)$,
$U \mapsto HP\left(U {\otimes} {\Q}\right)$ and
$U \mapsto HC\left(U {\otimes} {\Q}\right)$, where $U$ is considered
as a scheme over $\Z$, and $HN$, $HP$ and $HC$ respectively are the presheaves
of negative cyclic homology, periodic cyclic homology and cyclic homology
spectra on the category of schemes over $\Z$.
There is a homotopy fibration sequence of the Eilenberg-Mac Lane spectra
({\sl cf.} \cite[Section~5.1]{Loday})
\begin{equation}\label{eqn:rational}
{\wt{{\sH}{\sN}}}_{\Q} \to {\wt{{\sH}{\sP}}}_{\Q} \to
{\Omega}^{-2} {\wt{{\sH}{\sC}}}_{\Q}.
\end{equation}
There is a generalized Chern character map ({\sl cf.} 
\cite[Section~8.4]{Loday})
\[
{\sK}_{\Q} \xrightarrow{ch} {\wt{{\sH}{\sN}}}_{\Q}.
\]
Let ${\sK}^{\inf}_{\Q}$ denote the homotopy fiber of the above map
of spectra. 
\begin{lem}\label{lem:Rdescent}
The presheaf of spectra ${\sK}_{\Q}$ on $Sch/k$ satisfies $cdh$-descent.
\end{lem}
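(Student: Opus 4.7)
The plan is to mirror the proof of Corollary~\ref{cor:descentL}, verifying the four hypotheses of Theorem~\ref{thm:descent} for $\sK_\Q$ on $Sch/k$. The crucial observation is that rationalization collapses the target of the Chern character on characteristic-$p$ schemes, reducing the problem to $\sK^{\inf}_\Q$, for which excision and nil-invariance are classical theorems.

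First I would observe that for every $U\in Sch/k$, the scheme $U\otimes_{\Z}\Q$ is empty, since $p=0$ in $\sO_U$ while $p$ is invertible in $\Q$. Consequently $\wt{{\sH}{\sN}}_{\Q}(U)$ is contractible for every $U\in Sch/k$, and the defining fibration $\sK^{\inf}_\Q\to\sK_\Q\to\wt{{\sH}{\sN}}_{\Q}$ yields a global weak equivalence $\sK^{\inf}_\Q\xrightarrow{\simeq}\sK_\Q$ of presheaves on $Sch/k$. It therefore suffices to establish $cdh$-descent for $\sK^{\inf}_\Q$.

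Next I would verify the four hypotheses of Theorem~\ref{thm:descent} for $\sK^{\inf}_\Q$ in turn. Excision is Corti\~nas's theorem resolving the KABI conjecture, which asserts precisely that the rational fiber of the Chern character from $K$-theory to negative cyclic homology satisfies excision for arbitrary (non-unital) rings. Invariance under infinitesimal extensions is Goodwillie's theorem, identifying relative rational $K$-theory of nilpotent ideals with relative rational negative cyclic homology. Nisnevich descent follows from Thomason's Nisnevich descent for $\sK$ \cite{Thomason2}, preserved under rationalization (a filtered colimit of fiber sequences), combined with the vacuous Nisnevich descent for $\wt{{\sH}{\sN}}_\Q$ on $Sch/k$. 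Finally, the Mayer-Vietoris property for blow-ups along regular closed embeddings for $\sK$ is Thomason's regular blow-up formula (compare \cite[Theorem~1.4]{BM}), which again passes to the rational fiber because $\wt{{\sH}{\sN}}_\Q$ is trivial on $Sch/k$.

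The main obstacle is invoking Corti\~nas's excision theorem in the precise sheaf-theoretic formulation of Section~3, and confirming that the regular blow-up Mayer-Vietoris property, verified in \cite{BM} for $K$-theory with finite coefficients, also holds for the Thomason spectrum rationally. Once these inputs are in place, Theorem~\ref{thm:descent} delivers $cdh$-descent for $\sK^{\inf}_\Q$, and hence for $\sK_\Q$.
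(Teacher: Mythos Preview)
Your proposal is correct and follows essentially the same route as the paper: reduce to $\sK^{\inf}_{\Q}$ via the vanishing of $\wt{\sH\sN}_{\Q}$ in positive characteristic, then verify the four hypotheses of Theorem~\ref{thm:descent} using Corti\~nas, Goodwillie, and Thomason. The only cosmetic difference is that the paper cites \cite[Theorem~2.1]{Thomason3} directly for the regular blow-up property of $\sK$ (hence $\sK_{\Q}$ and $\sK^{\inf}_{\Q}$), rather than routing through \cite[Theorem~1.4]{BM}; your final paragraph's ``obstacles'' are therefore not genuine issues.
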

\begin{proof} Since our schemes are defined over $k$ which is of positive
characteristic, we see that the presheaf of spectra ${\wt{{\sH}{\sN}}}_{\Q}$
is contractible. Hence using the homotopy fibration sequence
\[
{\sK}^{\inf}_{\Q} \to {\sK}_{\Q} \xrightarrow{ch} {\wt{{\sH}{\sN}}}_{\Q},
\]
it suffices to show that the presheaf of spectra ${\sK}^{\inf}_{\Q}$
satisfies $cdh$-descent on $Sch/k$. To this end, it suffices to show that
${\sK}^{\inf}_{\Q}$ satisfies all the conditions of Theorem~\ref{thm:descent}.
It satisfies excision by \cite[Theorem~01]{Cortinas} and it is invariant
under infinitesimal extension by \cite[Main Theorem]{Goodwillie}.
${\sK}^{\inf}_{\Q}$ satisfies Nisnevich descent by 
\cite[Theorem~10.8]{Thomason2} and it satisfies the Mayer-Vietoris
property for blow-up under regular closed embeddings by 
\cite[Theorem~2.1]{Thomason3}. This completes the proof of the lemma.
\end{proof}
\begin{cor}\label{cor:RK} Let $X$ be a $k$-variety of dimension $d$.
Then one has $K_i(X) {\otimes}_{\Z} {\Q} = 0 =
{\pi}_i\left({\wt{C}}_j {\sK}_{\Q}\right)(X)$ for $j \ge 0$ and $i < -d$.
\end{cor}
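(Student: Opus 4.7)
The plan is to apply the $cdh$-descent spectral sequence for $\sK_{\Q}$ from Lemma~\ref{lem:Rdescent}, exactly as was done for $L^n$ in Corollary~\ref{cor:descentL*}. Since $\sK_{\Q}$ satisfies $cdh$-descent on $Sch/k$, and since the $cdh$-cohomological dimension of $X$ is bounded by $d$ by \cite[Theorem~12.5]{SuslinV}, one obtains a strongly convergent spectral sequence
\[
E^{p,q}_2 = H^p_{cdh}\left(X, a_{cdh}{\pi}_q({\sK}_{\Q})\right) \Rightarrow K_{q-p}(X) \otimes_{\Z} {\Q}.
\]

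The key input is the vanishing of the sheaves $a_{cdh}{\pi}_q({\sK}_{\Q})$ for $q < 0$. Indeed, if $U$ is a smooth $k$-variety, then $K_q(U) = 0$ for $q < 0$ by \cite[Theorem~10.8]{Thomason2}, and therefore ${\pi}_q({\sK}_{\Q})(U) = K_q(U) \otimes_{\Z} {\Q} = 0$ for $q < 0$. Since the resolution of singularities holds over $k$, every variety admits a $cdh$-cover by smooth varieties, and hence $a_{cdh}{\pi}_q({\sK}_{\Q}) = 0$ for $q < 0$. Consequently, for $i < -d$, the term $E^{p,q}_2$ contributing to $K_i(X) \otimes_{\Z} {\Q}$ vanishes: if $p > d$, then $H^p_{cdh}(X,-) = 0$, and if $p \le d$ then $q = i + p < -d + d = 0$ forces the coefficient sheaf to vanish. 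This yields the desired vanishing of rational $K$-theory.

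For the second statement, first note that the presheaf ${\sK}_{\Q}(- \times {\A}^j)$ satisfies $cdh$-descent (any abstract blow-up square for $X$ gives, after the product with $\A^j$, an abstract blow-up square, and similarly for Nisnevich squares); hence the homotopy cofiber ${\wt{C}}_j {\sK}_{\Q}$ also satisfies $cdh$-descent. Furthermore, if $U$ is smooth over $k$, then $U \times \A^j$ is again smooth, so $K_q(U \times \A^j) = 0$ for $q < 0$, and thus ${\pi}_q({\wt{C}}_j {\sK}_{\Q})(U) = 0$ for $q < 0$ as well. Resolution of singularities then gives $a_{cdh}{\pi}_q({\wt{C}}_j {\sK}_{\Q}) = 0$ for $q < 0$, and the analogous $cdh$-descent spectral sequence yields ${\pi}_i({\wt{C}}_j {\sK}_{\Q})(X) = 0$ for $i < -d$ by the same vanishing argument.

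No genuinely hard step arises here: the heavy lifting was already carried out in Lemma~\ref{lem:Rdescent}, which reduced $cdh$-descent for ${\sK}_{\Q}$ to the infinitesimal $K$-theory (exploiting that ${\wt{{\sH}{\sN}}}_{\Q}$ is contractible in positive characteristic). The only point that requires a moment's care is the passage from $\sK_\Q$ to $\wt{C}_j \sK_\Q$, which rests on the observation that smoothness is preserved under products with affine space and that the cofiber of two $cdh$-descent presheaves again satisfies $cdh$-descent.
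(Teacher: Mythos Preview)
Your proof is correct and follows essentially the same route as the paper. The paper's own argument is a one-line reference to Lemma~\ref{lem:Rdescent}, the spectral sequence of Corollary~\ref{cor:descentL*}, and \cite[Theorem~12.5]{SuslinV}, together with the vanishing $a_{cdh}\pi_i(\sK_\Q)=0$ for $i<0$; you have simply unpacked these ingredients and made the $\wt{C}_j$-case explicit.
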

\begin{proof} This follows directly from Lemma~\ref{lem:Rdescent},
the spectral sequence of Corollary~\ref{cor:descentL*} and from
\cite[Theorem~12.5]{SuslinV} since $a_{cdh}{\pi}_i ({\sK}_{\Q}) = 0$
for $i < 0$.
\end{proof}   
\begin{lem}\label{lem:KH} 
Let ${\sK}{\sH}$ denote the presheaf of 
homotopy invariant $K$-theory on $Sch/k$ ({\sl cf.} \cite{Weibel3}).
Then ${\sK}{\sH}$ satisfies $cdh$-descent on $Sch/k$.
\end{lem}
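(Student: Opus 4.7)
The plan is to invoke Theorem~\ref{thm:descent} with $\sE = \sK\sH$, so the task reduces to verifying its four hypotheses for the presheaf of homotopy invariant $K$-theory: excision, invariance under infinitesimal extension, Nisnevich descent, and Mayer-Vietoris for blow-ups along regular closed embeddings.

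First I would exploit the construction of $\sK\sH$ as the realization of the simplicial presheaf $U \mapsto \sK(U \times \Delta^\bullet)$, where $\Delta^n = \Spec\bigl(k[t_0, \ldots, t_n]/(\sum t_i - 1)\bigr)$. Since realization preserves objectwise weak equivalences and homotopy Cartesian squares of spectra, any Mayer-Vietoris property satisfied pointwise by $\sK(- \times \Delta^n)$ for all $n$ is inherited by $\sK\sH$. Thomason's descent theorem \cite[Theorem~10.8]{Thomason2} gives Nisnevich descent for $\sK$, hence for each $\sK(- \times \Delta^n)$, and consequently for $\sK\sH$; in the same way, \cite[Theorem~2.1]{Thomason3} gives $\sK\sH$ the Mayer-Vietoris property for blow-ups along regular closed embeddings.

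The invariance under infinitesimal extension is built into the definition: since $\sK\sH$ is homotopy invariant and the map $A \to A/I$ for a nilpotent ideal $I$ induces a weak equivalence on $\sK(A[t_0,\ldots,t_n]/(\sum t_i - 1))$ after realization (one can compare with a polynomial extension where the nilpotent becomes a polynomial variable), the resulting spectrum $\sK\sH(X, Y)$ is contractible for any infinitesimal thickening; this is the nil-invariance of $KH$ originally due to Weibel \cite{Weibel3}.

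The main obstacle is excision. For $\sK$ itself, excision fails, but Cortinas' excision theorem \cite[Theorem~0.1]{Cortinas} shows that the infinitesimal $K$-theory $\sK^{\inf}$ satisfies excision, and combined with the excision for $HN$ and $HC$ this yields excision for $KH$ upon passing to the realization in $\Delta^\bullet$; concretely, for any $k$-algebra map $A \to B$ and ideal $I \cong f_* f^*(I)$, the fiber $\sK\sH(A, B, I)$ is contractible. With these four properties established, Theorem~\ref{thm:descent} applies directly to conclude that $\sK\sH$ satisfies $cdh$-descent on $Sch/k$.
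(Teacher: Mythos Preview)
Your overall strategy---verifying the four hypotheses of Theorem~\ref{thm:descent} for $\sK\sH$---is exactly what the paper intends: its proof simply invokes Haesemeyer's argument \cite[Theorem~6.4]{Haes}, and that argument proceeds precisely by establishing these properties and then using resolution of singularities. Your treatment of Nisnevich descent and of the Mayer--Vietoris property for blow-ups along regular embeddings (passing Thomason's results through the simplicial realization defining $\sK\sH$) is correct, and you correctly attribute nil-invariance of $\sK\sH$ to Weibel \cite{Weibel3}, even if the heuristic explanation preceding that citation is a bit muddled.

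The excision paragraph, however, contains a genuine error. Corti\~nas' theorem \cite{Cortinas} concerns the infinitesimal $K$-theory $\sK^{\inf}_{\Q}$, the homotopy fiber of the rational Chern character $\sK_{\Q} \to \wt{\sH\sN}_{\Q}$, and there is no mechanism by which excision for $\sK^{\inf}_{\Q}$ yields excision for $\sK\sH$: the two presheaves sit in no common fibration sequence, $\sK\sH$ is an integral theory while Corti\~nas' statement is rational, and ``passing to the realization in $\Delta^\bullet$'' does not connect them. The correct argument is much simpler and is again due to Weibel: excision for $\sK\sH$ is one of the basic structural properties established in \cite{Weibel3}. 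Once you replace the appeal to \cite{Cortinas} with this citation, your proof is complete and coincides with the paper's.
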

\begin{proof} The proof of the lemma follows exactly in the same way
as the case when when $k$ is of characteristic zero 
({\sl cf.} \cite[Theorem~6.4]{Haes}). Only thing one needs is that the
resolution of singularities should hold over $k$. We skip the details.
\end{proof}
\begin{cor}\label{cor:finite}
Let $X$ be a $k$-variety of dimension $d$ and let $n$ be a positive
integer prime to $p$. Then $K_i\left(X, {\Z}/n\right) = 0 =
{\wt{C}}_jK_i\left(X, {\Z}/n\right)$ for $j \ge 0$ and $i < -d$.
\end{cor}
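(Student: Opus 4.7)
The plan is to reduce \corref{cor:finite} to the analogous vanishing for homotopy invariant $K$-theory $\sK\sH$, for which \lemref{lem:KH} provides $cdh$-descent. The bridge is that when $n$ is prime to $p$, the canonical map $\sK/n \to \sK\sH/n$ should be a global weak equivalence of presheaves on $Sch/k$.

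First I would verify the equivalence $\sK/n \simeq \sK\sH/n$. Since $\sK\sH(X) = \hocolim_m \sK(X \times \Delta^m)$, the fiber of $\sK \to \sK\sH$ is built from relative $K$-groups of polynomial extensions of rings, and Weibel's classical theorem on $NK$-groups in characteristic $p$ (amplified by McCarthy's equivalence of $\sK(R,I)$ and $TC(R,I)$ for nilpotent ideals, and Hesselholt's structural results on $TC$ of $\F_p$-algebras) shows these relative $K$-groups are $p$-primary torsion. Multiplication by $n$ prime to $p$ is therefore invertible on the fiber, so the fiber becomes contractible upon smashing with the mod $n$ Moore spectrum.

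Granting this, I would apply the $cdh$-descent spectral sequence for $\sK\sH/n$ afforded by \lemref{lem:KH} together with the $cdh$-cohomological dimension bound from \cite[Theorem~12.5]{SuslinV}:
\[
E_2^{s,t} = H^s_{cdh}\left(X, a_{cdh}{\pi}_t(\sK\sH/n)\right) \Rightarrow \sK\sH_{t-s}(X, \Z/n), \qquad s \le d.
\]
Smooth varieties have no negative $K$-theory, so ${\pi}_t(\sK\sH/n)$ vanishes on $Sm/k$ for $t < 0$; resolution of singularities then forces the $cdh$-sheaf $a_{cdh}{\pi}_t(\sK\sH/n)$ to vanish in the same range. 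The spectral sequence therefore gives $\sK\sH_i(X, \Z/n) = 0$ for $i < -d$, and combined with the first step, $K_i(X, \Z/n) = 0$ for $i < -d$.

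For the $\wt{C}_j$-assertion, $\sK\sH$ is $\A^1$-homotopy invariant by construction, so $\wt{C}_j \sK\sH$ and hence $\wt{C}_j(\sK\sH/n)$ are contractible presheaves; transporting via the equivalence of the first step gives $\wt{C}_j(\sK/n) \simeq 0$ globally, which is in fact stronger than the stated vanishing. The principal obstacle will be the first step: upgrading the known $p$-primary nature of $NK$-groups for $\F_p$-algebras to a genuine presheaf-level equivalence $\sK/n \simeq \sK\sH/n$ on all of $Sch/k$, rather than just on smooth schemes or affine ones. Once that is in place, the remainder of the argument is a direct descent spectral sequence computation mirroring the proof of \corref{cor:RK}.
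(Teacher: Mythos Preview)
Your approach matches the paper's: reduce to $KH$ via the equivalence $K/n \simeq KH/n$ for $n$ prime to $p$, then invoke $cdh$-descent for $KH$ (\lemref{lem:KH}) together with the dimension-bounded descent spectral sequence. The ``principal obstacle'' you flag is not actually an obstacle---the paper simply cites \cite[Proposition~1.6]{Weibel3} for this equivalence on arbitrary schemes, so there is no need to rebuild it from $NK$-group torsion considerations; with that citation in hand, your argument (including the cleaner observation that $\wt{C}_j\,\sK\sH$ is contractible by $\A^1$-invariance) goes through exactly as written.
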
   
\begin{proof} The statement of the corollary for $KH_i(X)$
and ${\wt{C}}_jKH_i(X)$ follows from Lemma~\ref{lem:KH} and
Corollary~\ref{cor:descentL*}. For $n$ prime to $p$, the natural
map $K_i\left(X, {\Z}/n\right) \to KH_i\left(X, {\Z}/n\right)$ is
an isomorphism by \cite[Proposition~1.6]{Weibel3}, 
and the same conclusion holds for ${\wt{C}}_j K_i(X)$.
Now the corollary follows from the exact sequence
\[
0 \to KH_i(X) {\otimes} {\Z}/n \to
KH_i\left(X, {\Z}/n\right) \to {\rm Tor}\left(KH_{i-1}(X), {\Z}/n\right)
\to 0.\]
\end{proof} 
{\bf{Proof of Theorem~\ref{thm:main}:}}
For any abelian group $A$, let $_nA$ denote the subgroup of $n$-torsion
elements of $A$.
It follows from Corollary~\ref{cor:RK} that $K_i(X)$
and ${\wt{C}}_jK_i(X)$ are torsion groups for $i < -d$. Thus we only need to 
show that these groups have no torsion whenever $i < -d-2$.
For any positive integer $n$, there is a short exact sequence
\[ 
0 \to K_i(X) {\otimes} {\Z}/n \to
K_i\left(X, {\Z}/n\right) \to _nK_{i-1}(X) \to 0.
\]
Theorem~\ref{thm:main*} and Corollary~\ref{cor:finite}
now immediately imply that $K_i(X)$ and ${\wt{C}}_jK_i(X)$ are divisible
groups for $i < -d-1$. Moreover, these groups are torsion-free
for $i < -d-2$. In particular, $K_i(X) = 0$ and $X$ is $K_i$-regular for 
$i < -d-2$.
$\hfill\square$
\\
\\ 

School of Mathematics \\
Tata Institute Of Fundamental Research \\
Homi Bhabha Road \\
Mumbai,400005, India \\
email : amal@math.tifr.res.in \\

\begin{thebibliography}{99}
\bibitem{BM} A. Blumberg, M. Mandell, {\sl Localization theorems in topological
Hochschild homology and topological cyclic homology\/}, preprint,
math. arXiv:08023938v1, 27 Feb., 2008. \\
\bibitem{Boks} M. B{\:o}kstedt, {\sl Topological Hochschild homology\/},
preprint, Bielefeld. \\
\bibitem{Cortinas} G. Cortinas, {\sl The obstruction to excision in $K$-theory and 
in cyclic homology\/}, Invent. Math.,  {\bf 164}, (2006),  no. 1, 143-173. \\
\bibitem{CHSW} G. Cortinas, C. Haesemeyer, M. Schlichting, C. Weibel,
{\sl Cyclic homology, cdh-cohomology and negative $K$-theory\/},  
Ann. of Math., (2),  {\bf 167}, (2008),  no. 2, 549-573. \\
\bibitem{GH1} T. Geisser, L. Hesselholt, {\sl Topological cyclic homology of 
schemes\/}, Algebraic $K$-theory (Seattle, WA, 1997),  
Proc. Sympos. Pure Math., Amer. Math. Soc., Providence, RI, {\bf 67},
(1999), 41-87. \\
\bibitem{GH2} T. Geisser, L. Hesselholt, {\sl Bi-relative algebraic $K$-theory and 
topological cyclic homology\/},  Invent. Math.,  {\bf 166},  
(2006),  no. 2, 359-395. \\
\bibitem{Goodwillie} T. Goodwillie, {\sl Relative algebraic $K$-theory and 
cyclic homology\/},  Ann. of Math., (2),  {\bf 124}, (1986),  no. 2, 347-402. \\
\bibitem{Haes} C. Haesemeyer, {\sl Descent properties of homotopy $K$-theory\/},
Duke Math. J.,  {\bf 125}, (2004),  no. 3, 589-620. \\
\bibitem{Hessel1} L. Hesselholt, Personal communication. \\   
\bibitem{Hironaka1} H. Hironaka, {\sl Resolution of singularities of an algebraic
variety over a field of characteristic zero I, II\/}, Ann. of Math., (2),
{\bf 79},  (1964), 109-203; 205-326. \\
\bibitem{Hironaka2} H. Hironaka, {\sl ICTP lecture series on resloution of
singularities\/}, Trieste, Italy, June, 2006. \\
\bibitem{Illusie} L. Illusie, {\sl Complexe de de\thinspace Rham Witt et 
cohomologie cristalline\/},  Ann. Sci. Ãcole Norm. Sup., (4),  {\bf 12},
(1979), no. 4, 501-661. \\
\bibitem{Loday} J. Loday, {\sl Cyclic Homology\/}, Grund. der math.
Wissen. series, Springer-Verlag, {\bf 301}, (1992). \\
\bibitem{Lubkin} S. Lubkin, {\sl Generalization of $p$-adic cohomology: bounded 
Witt vectors. A canonical lifting of a variety in characteristic $p\not=0$
back to characteristic zero\/}, Compositio Math.,  {\bf 34}, (1977), no. 3, 
225-277. \\
\bibitem{Mac} R. McCarthy, {\sl Relative algebraic $K$-theory and topological 
cyclic homology\/}, Acta Math.,  {\bf 179}, (1997),  no. 2, 197-222. \\
\bibitem{SuslinV} A. Suslin, V. Voevodsky, {\sl Bloch-Kato conjecture and motivic 
cohomology with finite coefficients\/},  
The arithmetic and geometry of algebraic cycles (Banff, AB, 1998),  
NATO Sci. Ser. C Math. Phys. Sci., Kluwer Acad. Publ., Dordrecht, {\bf 548},
(2000), 117-189. \\
\bibitem{Thomason1} R. Thomason, {\sl Algebraic $K$-theory and {\'e}tale 
cohomology\/},
Ann. Sci. Ãcole Norm. Sup., (4),  {\bf 18}, (1985),  no. 3, 437-552. \\
\bibitem{Thomason2} R. Thomason, T. Trobaugh, 
{\sl Higher algebraic $K$-theory of schemes and of 
derived categories\/},  The Grothendieck Festschrift, Vol. III,  
Progr. Math., 88, Birkhäuser Boston, Boston, MA, (1990), 247-435. \\
\bibitem{Thomason3} R. Thomason, {\sl Les $K$-groupes d'un schéma éclaté et une 
formule d'intersection excédentaire\/}, Invent. Math.,  {\bf 112},  
(1993),  no. 1, 195-215. \\
\bibitem{Voev} V. Voevodsky, {\sl Unstable motivic homotopy categories in 
Nisnevich and cdh-topologies\/}, preprint at www.math.uiuc.edu. \\
\bibitem{Vorst} T. Vorst,  {\sl Localisation of $K$-theory of polynomial
rings\/}, Math. Ann., {\bf 244}, (1979), 33-53. \\
\bibitem{Weibel1} C. Weibel, {\sl $K$-theory and analytic isomorphisms\/},
Invent. Math., {\bf 61}, (1980), 177-197. \\
\bibitem{Weibel2} C. Weibel, {\sl Negative $K$-theory of Normal Surfaces\/},
Duke J. Math., {\bf 108}, (2001), 1-35. \\
\bibitem{Weibel3} C. Weibel, {\sl Homotopy algebraic $K$-theory\/},
Algebraic $K$-theory and algebraic number theory (Honolulu, HI, 1987),  
Contemp. Math., {\bf 83}, (1989), 461-488.
\end{thebibliography}
\end{document}